\numberwithin{equation}{section}
\newtheorem{theorem}{\bf Theorem}[section]
\newtheorem{lemma}{Lemma}[section]
\newtheorem{Corollary}{Corollary}[section]
\title{\bf An L-DEIM Induced High Order Tensor Interpolatory Decomposition}
\author{
	    Zhengbang Cao\footnote{School of Mathematical Sciences, Ocean University of China, Qingdao 266100, China.
		E-Mail: {\tt caozhengbang@stu.ouc.edu.cn}}
   \and Yimin Wei\footnote{School of Mathematical Sciences and and Key Laboratory of Mathematics for Nonlinear Sciences, Fudan University, Shanghai 200433, China.
		E-Mail: {\tt ymwei@fudan.edu.cn}}
\and	Pengpeng Xie\footnote{Corresponding author (P. Xie). School of Mathematical Sciences, Ocean University of China, Qingdao 266100, China.
		E-Mail: {\tt xie@ouc.edu.cn}.
	}
}
\date{}
\begin{document}
	\maketitle
	\begin{abstract}
		This paper derives the CUR-type factorization for tensors in the Tucker format based on a new variant of the discrete empirical interpolation method known as L-DEIM.
		This novel sampling technique allows us to construct an efficient algorithm for computing the structure-preserving decomposition, which significantly reduces the computational cost. 
		For large-scale datasets, we incorporate the random sampling technique with the L-DEIM procedure to further improve efficiency. 
		Moreover, we propose randomized algorithms for computing a hybrid decomposition, which yield interpretable factorization and provide a smaller approximation error than the tensor CUR factorization.
		We provide comprehensive analysis of probabilistic errors associated with our proposed algorithms, and present numerical results that demonstrate the effectiveness of our methods.
		\\ \hspace*{\fill} \\
		{\bf Keywords:} CUR decomposition; L-DEIM; low-rank approximation; Tucker decomposition; randomized algorithm
		\\ \hspace*{\fill} \\
	{\bf Mathematics Subject Classification:} 15A23, 15A69\\
	\end{abstract}
	\section{Introduction}
	\hskip 2em Tensor decompositions \cite{kolda2009tensorSIAM,kilmer2013SIAMthird,tucker1966PSYchometriKasome, Qi2021triple, carroll1970PSYanalysis, Delathauwer2000}, are efficient and widely used for multi-way data processing, and in particular, they can be utilized to compress the data tensors without destroying their intrinsic multidimensional structure.
	This work presents new algorithms for computing the CUR-type and hybrid CUR-type factorizations for tensors in the Tucker format based on a novel index selection procedure, that is, the L-discrete empirical interpolation method (L-DEIM) \cite{gidisu2022hybridArxiv}.
	Further, random sampling techniques \cite{martinsson2011ACHArandomized,halko2011SIAMfinding} are also utilized to enhance the efficiency of the proposed algorithms.
	
	\hskip 2em
	A CUR factorization \cite{sorensen2016deimSIAM,drineas2008SIAMrelative,hamm2021SIAMperturbations}
	is a low-rank approximation of a matrix $X \in \mathbb{R}^{m \times n}$ of the form
	\begin{equation}\label{matrix CUR}
		X\approx CUR,
	\end{equation}
	where $C$ and $R$ are the matrices that consist of actual columns and rows of $A$,
	inheriting certain important properties of the original matrix, such as sparsity,
	non-negativity, integer-values and so on.
	This novel property has rendered the CUR a potent tool for data analysis and attractive in a wide range of applications.
	To deal with the multi-dimensional data, the CUR-type decomposition for tensors was proposed by
	\cite{drineas2007randomizedLAA,chen2022NFAOtensor,cortinovis2020SIAMlow}, and the perturbation analysis and sampling strategy were also researched in \cite{cai2021JMLRmode,ahmadi2021IEEEcross,che2022JOTAperturbations}.
	For tensors in the Tucker format, \cite{drineas2007randomizedLAA} provides a multilinear rank-$(r_1,r_2,\ldots,r_d)$ approximation for a given tensor $\mathcal{X}\in\mathbb{R}^{I_1\times I_2\times \cdots \times I_d}$ such that
	\begin{equation}\label{tensor CUR}
		\mathcal{X}\approx \mathcal{G} \times_{1} C_{1} \times_{2} C_{2} \times\cdots \times_{d} C_{d},
	\end{equation}
	where $\mathcal{G}\in\mathbb{R}^{r_1\times r_2\times \cdots\times r_d}$ is a core tensor, and the columns of matrices $\{C_n\}_{n=1}^d\in \mathbb{R}^{I_n\times r_n}$ are generated by sampling from the mode-$n$ fibers of $\mathcal{X}$, using a probability distribution that is dependent on the norms of the columns. 
	The author in \cite{saibaba2016hoidSIAM} derived a factorization of the form (\ref{tensor CUR}) based on the interpolatory decomposition \cite{cheng2005SIAMcompression,tyrtyshnikov2000COMPincomplete}, which was denoted as higher order interpolatory decomposition (HOID),
	where a number of sophisticated techniques for subset selection, including the DEIM \cite{barrault2004CRMempirical,chaturantabut2010deimSIAM},
	leverage score sampling \cite{jolliffe1972JRSSdiscarding,mahoneySIAM2008tensor}, strong rank-revealing QR (RRQR) \cite{gu1996SIAMefficient}
	and QR decomposition with column pivoting (PQR)\cite{drmac2016SIAMnew} were also extended to the domain of tensors.
	Numerical examples in \cite{saibaba2016hoidSIAM} demonstrate that the accuracy of the approximation heavily relies on the sampling technique utilized. 
	Results show that the DEIM method incurs errors that are comparable to those of the strong RRQR method, while outperforming the leverage score approach.
	Nevertheless, it should be noted that the DEIM approach demands the computation of the singular value decomposition (SVD) for each mode, and the number of indices that can be chosen is limited by the number of input singular vectors.
	These characteristics make it challenging to apply DEIM in the context of big data problems, where the computation of singular vectors of tensor unfolding can prove to be a formidable task.
	
	\hskip 2em
	In recent times, a novel variant of the DEIM named L-DEIM has been introduced \cite{gidisu2022hybridArxiv}.
	This new method embodies a hybrid approach that leverages the advantageous properties of both deterministic leverage scores and DEIM,
	allowing for the selection of a larger number of indices than input singular vectors, while still achieving outcomes that are comparable to those of the original DEIM.
	In this paper, leveraging this novel sampling procedure, we develop efficient algorithms for
 	computing a multilinear rank-$(r_1,r_2,\ldots,r_d)$ approximation of the form (\ref{tensor CUR}).
	To be specific, during the process of constructing the approximation, the L-DEIM procedure operates on the $\widehat{r}_n$ right singular vectors of the mode-$n$ unfolding to select the indices, where $\widehat{r}_n\le r_n$, 
	and in practice, a value of $\widehat{r}_n=r_n/2$ has been found to yield favorable empirical results, just as shown in \cite{gidisu2022hybridArxiv}.
	Consequently, our proposed algorithm is especially advantageous in the scenarios involving large-scale data, where computing the $r_n$ singular value vectors of mode-$n$ unfolding is computationally expensive even for moderately small values of $r_n$.
	Despite its benefits, computationally, the L-DEIM induced HOID still necessitates the input of the SVD of each tensor unfolding, which can be prohibitively expensive for the tensors with large dimensions $d$ or for those with significant storage requirements. 
	Inspired by the remarkable achievements of randomized algorithms explored in
	\cite{che2021JSCefficient,che2019ACMrandomized,ahmadi2021IEEErandomized,che2020SIAMcomputation,che2020TCCTArandomized,che2021JCAMrandomized,minster2020SIAMrandomized},
	we incorporate random sampling techniques into our algorithm to enhance its efficiency,
	which facilitates matrix and tensor decompositions by not only decreasing the computational complexity of deterministic algorithms, but also reducing inter-level memory communication.
	Specifically, there are two distinct computational stages involving the processing of generating the factor matrices $\{C_n\}_{n=1}^d$.
	In the first stage, we leverage random sampling methods \cite{martinsson2011ACHArandomized} to perform a mode-wise reduction of the unfolding matrices.
	This enables the construction of a low-dimensional subspace that faithfully captures the essential actions of the unfolding matrices.
	Subsequently, in the second stage, we implement the L-DEIM procedure to operate on the singular matrices approximated in the first stage.
	The aim of this procedure is to selectively sample the fibers, which constitute a pivotal component of the factorization process.
	In certain applications, it may be advisable to selectively sample fibers in only certain modes, rather than all of them.
	In such cases, a hybrid CUR-type Tucker decomposition, as proposed in \cite{begovic2022hybridBIT}, is preferred over the factorization (\ref{tensor CUR}), since it provides a smaller approximation error.
	By combining the randomized techniques and the sampling procedures such as the PQR, DEIM and L-DEIM, we provide three versions of efficient randomized algorithms for computing a hybrid CUR-type Tucker decomposition.
	Compared with the CUR-type and hybrid CUR-type Tucker decomposition algorithms based on the regular sampling procedure such as the PQR, RRQR and DEIM, our algorithms allow for a comparable accuracy with significantly lower cost and will be more computationally efficient on large-scale data.
	Details of the algorithm and theoretical analysis with the numerical results are provided to demonstrate the effectiveness of our approaches.
	
	\hskip 2em
	The rest of this article is organized as follows.
	In Section 2, we introduce some basic notation and describe several sampling techniques including the DEIM, the deterministic leverage score and the L-DEIM. Then we review some existing tensor decomposition, notably the higher order singular value decomposition (HOSVD)\cite{kolda2009tensorSIAM}, the HOID and the hybrid decomposition.
	Next, in Section 3 we present our algorithm for computing the HOID based on the Tucker factorization using the L-DEIM procedure, where the error bound is also presented in detail.
	In Section 4, we develop randomized algorithms for computing the HOID based on the sampling procedure L-DEIM, along with detailed probabilistic error analysis.
	The special cases where the dimensions of the input tensors are restricted to dimension 2, i.e., matrices are also considered.
	In Section 5, we provide new randomized algorithms for computing the hybrid decomposition and derive the probabilistic error analysis of the proposed algorithms.
	In Section 6, we test the performance of the proposed algorithms on several synthetic tensors and real-world datasets.
	Finally, in Section 7, we end this paper with concluding remarks.
	\section{Preliminaries}
	\subsection{Background on tensors}
	\hskip 2em
	We begin by introducing fundamental notation and concepts for tensors.
	For a more comprehensive treatment, we refer readers to \cite{kolda2009tensorSIAM}.
	A $d$-dimensional tensor is represented by $\mathcal{X}\in\mathbb{R}^{I_1\times I_2\times\cdots \times I_d}$ where the entries of $\mathcal{X}$ are denoted by
	\begin{equation*}
		x_{j_1 j_2 \ldots j_d},\quad 1\le j_1\le I_1, 1 \le j_2\le I_2,\ldots,1\le j_d\le I_d.
	\end{equation*}
	The norm of a $d$-dimensional tensor $\mathcal{X}$ with entries $x_{j_1 j_2 \ldots j_d}$ 
	is defined by
	\begin{equation*}
		\|\mathcal{X}\|_F
		=\sqrt{\sum_{j_1=1}^{I_1}\sum_{j_2=1}^{I_2}\cdots\sum_{j_d=1}^{I_d}|x_{j_1 j_2 \ldots j_d}|^2}.
	\end{equation*}
	The vector $2$-norm	and the matrix norm it induces are denoted by $\left\|\cdot\right\|$.
	$X_{(n)} \in \mathbb{R}^{I_n \times \prod_{k\neq n}I_k}$ represents the $n$th mode unfolding of the tensor $\mathcal{X}$.
	The multilinear rank of $\mathcal{X}$ is a tuple $(r_1,r_2,\ldots,r_d)$ where $r_n$ is the rank of $X_{(n)}$.
	The $n$-mode product of the tensor $\mathcal{X}$ with a matrix $U\in\mathbb{R}^{k\times I_n}$ is represented $\mathcal{X}\times_{n}U$, generating a tensor
	$\mathcal{Y}\in\mathbb{R}^{I_1\times \cdots \times I_{n-1}\times k \times I_{n+1}\times \cdots \times I_d}$, and elementwise,
		\begin{equation*}
		\mathcal{Y}_{i_1\ldots i_{n-1} i i_{n+1}\ldots i_d}
		=\sum\limits_{i_n=1}^{I_n}x_{i_1 \ldots i_d}u_{i i_n}.
	\end{equation*}
It can also be expressed in terms of matrix unfolding:
		\begin{equation*}
		\mathcal{Y}=\mathcal{X} \times_{n} {U} \quad \Leftrightarrow \quad
		{Y}_{(n)}={U} {X}_{(n)}.
		\end{equation*}
	For a series of multiplications involving distinct modes, the following relationships hold:
	\begin{equation}\label{property of mode product}
		\mathcal{X}\times_m Y \times_n Z=\mathcal{X}\times_n Z \times_m Y\quad (m \neq n),\quad
		\mathcal{X}\times_n Y \times_n Z=\mathcal{X}\times_n (ZY).
	\end{equation}
	To index vectors and matrices, we use MATLAB notation throughout this paper.
	For example, $X(\mathbf{q},:)$ represents the $k$ rows of $X$ indexed by the indices set $\mathbf{q}\in\mathbb{N}^k$.
	
	\hskip 2em
	Now we are set to introduce several algorithms that produce a multilinear rank-$(r_1,r_2,\ldots,r_d)$ approximation to tensors in the Tucker format, i.e., the HOSVD, HOID and the hybrid algorithm.
	
	\hskip 2em
	Given a tensor $\mathcal{X}\in\mathbb{R}^{I_1\times I_2\times \cdots\times I_d}$, the HOSVD algorithm computes a core tensor $\mathcal{U}\in\mathbb{R}^{r_1\times r_2\times \cdots\times r_d}$ and a collection of matrices $U_j\in\mathbb{R}^{I_j \times r_j}$ containing the $r_j$ leading left singular vectors of $X_{(n)}$, $n=1,2,\ldots,d$ such that
	\begin{equation}\label{HOSVD}
		\mathcal{X}\approx \mathcal{U}\times_1 U_1 \times_2 U_2 \times\cdots \times_d U_d.
	\end{equation}
	Although the approximation error obtained by (\ref{HOSVD}) is theoretically smaller than that of (\ref{tensor CUR}), the CUR-type approximation facilitates interpreting the underlying data tensors and decomposing tensors so that their structures
 can be potentially preserved.
	For certain applications where it may be appropriate to sample fibers in only some of the modes, rather than all of them, 
the author in \cite{begovic2022hybridBIT} proposed a hybrid CUR-type decomposition, which provides a decomposition such that
	\begin{equation}\label{hybrid decomposition}
		\mathcal{X}\approx\mathcal{G}\times_1 C_1\cdots\times_t C_t \times_{t+1}U_{t+1}\cdots\times_d U_d,
	\end{equation}
	where $\mathcal{G}\in\mathbb{R}^{r_1\times r_2\times \cdots\times r_d}$ is a core tensor, and the columns of matrices $\{C_i\}_{i=1}^t\in \mathbb{R}^{I_i\times r_i}$ are extracted from the mode-$i$ fibers of $\mathcal{X}$, while orthonormal matrices $\{U_j\}_{j=t+1}^d\in\mathbb{R}^{I_j\times r_j}$ are selected to minimize the approximation error.
	Consequently, the approximation error obtained this way is smaller than the one from (\ref{tensor CUR}).
	%
	%
	\subsection{Subset selection procedure}
	\hskip 2em
	We now give a concise introduction to several subset selection procedures.
	
	\hskip 2em
	Assume the best rank-$k$ SVD of
	$X\approx V\Sigma W^{\mathrm{T}}$ is available, where matrices $V$ and $W$ consist of the $k$ leading left and right singular vectors of $X$ respectively.
	The deterministic leverage score sampling algorithm extracts $k$ columns of $X$ corresponding to the indices of the largest leverage scores $\ell_{i}=\|W(i,:)\|^{2}.$ 
	While the simplicity of this approach has yielded remarkable success in practical applications, we note that a complete theoretical guarantee has yet to be established.
	
	\hskip 2em
 	The DEIM \cite{chaturantabut2010deimSIAM} is an index selection procedure that gives simple, deterministic CUR factorizations for both matrices and tensors.
 	The authors in \cite{sorensen2016deimSIAM} and \cite{saibaba2016hoidSIAM} utilized this procedure in the context of subset selection to CUR factorization for matrices and tensors respectively.
	Specifically, 
	the DEIM algorithm follows a sequential procedure to process the columns of matrices $V$ and $W$, beginning with the first dominant singular vector, and
	the next index corresponds to the largest magnitude in the residual vector.
	See the pseudocode in Algorithm \ref{Al-DEIM} for more details.
	
		\begin{algorithm}[htb]
		\caption{DEIM index selection \cite{chaturantabut2010deimSIAM}}
		\label{Al-DEIM}
		\hspace*{0.02in} {\bf Require:}
		$V \in \mathbb{R}^{m \times k}$, $W \in \mathbb{R}^{n \times k}$ with $k \leq \mathrm{min}(m,n)$.
		\begin{algorithmic}[1]
			\State  $v=V(:, 1)$.
			\State $p_{1}=\operatorname{argmax}_{1 \leq i \leq n}\left|v_{i}\right|$.
			\State $\mathbf{p}=[\begin{array}{l}
				p_1
			\end{array}]$.
			\For{$j=2,3, \ldots, k$}
				\State $v=V(:, j)$.
				\State $r=v-V(:, 1: j-1) (V(\mathbf{p}, 1: j-1)\backslash v(\mathbf{p}))$.
				\State $p_{j}=\operatorname{argmax}_{1 \leq i \leq m}\left|{r}_{i}\right|$.
				\State $\mathbf{p}=\left[\begin{array}{ll}\mathbf{p} & p_{j}\end{array}\right]$.
			\EndFor
			\State Perform 1-9 on  $W$ to obtain the index $\mathbf{q}$.
			\State {\bf return} column and row index $\mathbf{q},\mathbf{p}\in\mathbb{N}^k_{+}$ respectively, with non-repeating entries.
		\end{algorithmic}
	\end{algorithm}

	\hskip 2em
	However, a major limitation of DEIM is its inability to select indices beyond the number of available singular vectors.
	To overcome this shortcoming, a novel variant of DEIM, called L-DEIM (Algorithm \ref{Al-LDEIM}), which integrates the advantage of leverage score sampling and the DEIM procedures, was proposed in \cite{gidisu2022hybridArxiv}.
	There are two principal steps involved in this method.
	Firstly, the standard DEIM procedure is executed to select the initial $\widehat{k}$ indices.
	Subsequently, the 2-norm of the rows of the residual singular vectors is computed to select the additional $k-\widehat{k}$ indices.
	By adopting this method, only $\widehat{k}$ singular vectors are required to obtain $k$ indices.
	It is concluded in \cite{gidisu2022hybridArxiv} that L-DEIM is computationally more efficient than the original DEIM, while the accuracy of both methods can be comparable if the parameter $\widehat{k}$ is chosen appropriately.
	Besides, the L-DEIM algorithm degenerates to the DEIM algorithm, if we set $\widehat{k} = k$.
	\begin{algorithm}[htb]
		\caption{L-DEIM index selection \cite{gidisu2022hybridArxiv}}
		\label{Al-LDEIM}
		\hspace*{0.02in} {\bf Require:}
		$V \in \mathbb{R}^{m \times \widehat{k}}$ and $W \in \mathbb{R}^{n \times \widehat{k}}$, target rank $k$ with $\widehat{k} \leq k \leq \min (m, n)$.
		\begin{algorithmic}[1]
			\For{$j=1,2,\ldots,\widehat{k}$}
			\State $\mathbf{p}(j)=\operatorname{argmax}_{1 \leq i \leq m}\left|(V(i, j))\right|$.
			\State $V(:, j+1)=V(:, j+1)-V(:, 1: j) \cdot(V(\mathbf{p}, 1: j) \backslash V(\mathbf{p}, j+1))$.
			\EndFor
			\State Compute $\ell_{i}=\left\|V(i,:)\right\| \quad$ for $i=1,2, \ldots,m$.
			\State Sort $\ell$ in non-increasing order.
			\State Remove entries in $\ell$ corresponding to the indices in $\mathbf{p}$.
			\State $\mathbf{p}^{\prime}=k-\widehat{k}$ indices corresponding to $k-\widehat{k}$ largest entries of $\ell$.
			\State $\mathbf{p}=\left[\mathbf{p} ; \mathbf{p}^{\prime}\right]$.
			\State Perform 1-9 on $W$ to get index set $\mathbf{q}$.
			\State {\bf return} column and row indices $\mathbf{q},\mathbf{p}\in\mathbb{N}^k_{+}$ respectively, with non-repeating entries.
		\end{algorithmic}
	\end{algorithm}
	\section{L-DEIM based HOID}
	\hskip 2em
	This section derives a new variant of the HOID algorithm for representing low multilinear rank tensors $\mathcal{X}\in\mathbb{R}^{I_1\times I_2\times\cdots \times I_d}$ of the form (\ref{tensor CUR}) based on the L-DEIM procedure.
	
	\hskip 2em
	As mentioned earlier, the factor matrices $\{C_{n}\}_{n=1}^d$ of (\ref{tensor CUR}) are formed by extracting $r_n$ columns from the mode-$n$ tensor unfolding $X_{(n)}$, where the index sets of the selected columns are denoted by $\mathbf{s}$.
	We assume that the best rank-$\widehat{r}_n$ SVD $X_{(n)}\approx V_n\Sigma W_n^{\mathrm{T}}$, $\widehat{r}_n\le r_n$
	are available.
	Then we compute $\mathbf{s}$ by applying the L-DEIM algorithm on the matrix $W_{n}$.
	Once $\{C_n\}_{n=1}^d$ are obtained, the core tensor is computed as
	\begin{equation}\label{how core tensor}
		\mathcal{G}=
		\mathcal{X} \times_{1} {C}_{1}^{\dagger} \times_{2} {C}_{2}^{\dagger}\cdots \times_{d} {C}_{d}^{\dagger},
	\end{equation}
	which is optimal in the Frobenius norm \cite{sorensen2016deimSIAM}, where ${C}_{i}^{\dagger}$ is the Moore-Penrose inverse of ${C}_{i}$ \cite{Wei2018}.
	
	\hskip 2em
	We introduce our L-DEIM based HOID algorithm in Algorithm \ref{Al-LDEIM-HOID}.
	As stated in \cite{saibaba2016hoidSIAM}, for certain applications, it is often unnecessary to compute the core tensor $\mathcal{G}$.
	Utilizing the novel subset selection algorithm L-DEIM, our proposed algorithm allows to form a multilinear rank-$(r_1, r_2, \ldots,r_d)$ approximation for the given tensor $\mathcal{X}$ without having to compute $r_n$ right singular vectors of mode-$n$ tensor unfolding $X_{(n)}$, $n=1,2, \ldots,d$ and it only requires a smaller $\widehat{r}_n$ instead.
	Hence, the new method is particularly appealing in the cases where computing the singular vectors is computationally expensive and the dimension $d$ is prohibitively large.
	\begin{algorithm}[htb]
		\caption{L-DEIM based HOID}
		\label{Al-LDEIM-HOID}
		\hspace*{0.02in} {\bf Require:}
		$\mathcal{X}\in\mathbb{R}^{I_1\times I_2\times \cdots \times I_d}$, desired multilinear rank $(r_1,r_2,\ldots,r_d)$ and parameter $(\widehat{r}_1,\widehat{r}_2,\ldots,\widehat{r}_d)$.
		\begin{algorithmic}[1]
			\For{$n=1,2,\ldots,d$}
			\State Compute $\widehat{r}_n$ right singular vectors $W_{n}\in \mathbb{R}^{\prod_{k\neq n}I_k\times \widehat{r}_n}$ of mode-$n$ tensor unfolding $X_{(n)}$.
			\For{$j=1,2,\ldots,\widehat{r}_n$}
			\State $\mathbf{s}(j)=\operatorname{argmax}_{1 \leq i \leq \prod_{k\neq n}I_k}\left|(W_{n}(i, j))\right|$.
			\State $W_{n}(:, j+1)=W_n(:, j+1)-W_n(:, 1: j) \cdot(W_n(\mathbf{s}, 1: j) \backslash W_n(\mathbf{s}, j+1))$.
			\EndFor
			\State Compute $\ell_{i}=\left\|W_n(i,:)\right\| \quad$ for $i=1,2, \ldots, \prod_{k\neq n}I_k$.
			\State Sort $\ell$ in non-increasing order.
			\State Remove entries in $\ell$ corresponding to the indices in $\mathbf{s}$.
			\State $\mathbf{s}^{\prime}=r_n-\widehat{r}_n$ indices corresponding to $r_n-\widehat{r}_n$ largest entries of $\ell$.
			\State $\mathbf{s}=\left[\mathbf{s} ; \mathbf{s}^{\prime}\right]$.
			
			\State Form $C_n=X_{(n)}(:,\mathbf{s})$.
			\EndFor
			\State Compute core tensor
			$\mathcal{G}\in\mathbb{R}^{r_1\times r_2\times\cdots \times r_d}$ as
			$	\mathcal{G}=\mathcal{X}\times_1 C^{ \dagger}_1\times_2 C^{\dagger}_2\cdots \times_d C^{\dagger}_d$.
			\State {\bf return} Tucker decomposition
			$\mathcal{X}\approx\mathcal{G}\times_1C_1 \times_2 C_2  \cdots\times_d C_d$.
		\end{algorithmic}
	\end{algorithm}

	\hskip 2em
	We now derive an estimate for the error incurred to produce a HOID based on the L-DEIM.
	Before that, we first present a result related to \cite[Lemma 2.1]{saibaba2016hoidSIAM}.

	\begin{lemma}\label{convert tensor norm to matrix}
		Let the factor matrices $C_n, n=1,2,\ldots,d$ and the core tensor $\mathcal{G}$ be computed as in (\ref{how core tensor}), then we have the following error bound
		\begin{equation*}
			\left\|\mathcal{X}- \mathcal{G} \times_{1} C_{1}  \times_{2} C_{2} \cdots \times_{d} C_{d} \right\|_F^2
			\le
			\sum_{n=1}^{d} \left\{\mathrm{min}( I_n,\prod_{k\neq n}I_k )      \left\| (I-C_nC_n^{\dagger})X_{(n)} \right\|^2\right\}.
		\end{equation*}
	\end{lemma}
	\begin{proof}
		By the definition of core tensor $\mathcal{G}$ and the property (\ref{property of mode product}), we have
		\begin{equation*}
			\mathcal{G} \times_{1} C_{1} \times_{2} C_{2}  \cdots \times_{d} C_{d}
			=\mathcal{X} \times_{1} \left(C_{1} C_{1}^{\dagger}\right) \times_{2} \cdots \times_{d} \left(C_{d} C_{d}^{\dagger}\right).
		\end{equation*}
	Recalling the result in \cite[Lemma 2.1]{saibaba2016hoidSIAM}, for orthogonal projections $\{\Pi_i\}_{i=1}^n$, we have
	\begin{equation*}
		\| \mathcal{X}-\mathcal{X}\times_1\Pi_1  \times_2\Pi_2 \cdots \times_d\Pi_d  \|_F^2
		\le \sum_{n=1}^d \|\mathcal{X}-\mathcal{X}\times_n\Pi_n\|_F^2.
	\end{equation*}
	Observe that $C_nC_n^{\dagger}$ is an orthogonal  projection matrix, it follows that
	\begin{equation*}
		\begin{aligned}
			\|\mathcal{X}- \mathcal{G} \times_{1} C_{1} \times_{2} C_{2} \cdots \times_{d} C_{d}\|_F^2
			= &
			\|\mathcal{X}- \mathcal{X} \times_{1} \left(C_{1}C_1^{\dagger}\right)
			 \cdots
			\times_{d} \left(C_{d}C_d^{\dagger}\right)\|_F^2
			\\
			\le&
			\sum_{n=1}^{d} \|\mathcal{X}-\mathcal{X}\times_n (C_nC_n^{\dagger})\|_F^2
			\\
			\le&
			\sum_{n=1}^{d}\| (I-C_nC_n^{\dagger})X_{(n)} \|_F^2
			\\
			\le&
			\sum_{n=1}^{d} \left\{\mathrm{min}(I_n,\prod_{k\neq n}I_k) \left\| (I-C_nC_n^{\dagger})X_{(n)} \right\|^2\right\}.
		\end{aligned}
	\end{equation*}
	\end{proof}
 	The following theorem quantifies the error of the HOID produced	by Algorithm \ref{Al-LDEIM-HOID}.

	\begin{theorem}
		Suppose that $\mathcal{X}\in\mathbb{R}^{I_1\times    I_2 \times \cdots\times I_d}$
		with $I_n\le\prod_{k\neq n}I_k$ for $n=1,2\ldots,d$.
		Let the matrices $C_n$ for $n = 1,2,\ldots,d$ and the core tensor $\mathcal{G}$ be
		computed according to Algorithm \ref{Al-LDEIM-HOID}.
		Then we have the following error bound
		\begin{equation*}
			\left\|\mathcal{X}-\mathcal{G} \times_{1} C_{1} \times_{2} C_{2} \cdots \times_{d} C_{d}\right\|_{F}^{2} \le
			\sum\limits_{n=1}^d
			\left[
				I_n
				\left(\prod_{k\neq n}I_k\right)			
					\frac{ 4^{\widehat{r}_n}\cdot \widehat{r}_n}{3}
			\right]
			\sigma_{\widehat{r}_n+1}^2,
		\end{equation*}
	where $\sigma_{\widehat{r}_{n}+1}$ is the $(\widehat{r}_n+1)$th largest singular value of the mode-$n$ tensor unfolding $X_{(n)}$.
	\end{theorem}
	\begin{proof}
	It follows from Lemma \ref{convert tensor norm to matrix} that
		$$\left\|\mathcal{X}- \mathcal{G} \times_{1} C_{1} \times_{2} C_{2} \cdots \times_{d} C_{d} \right\|_F^2
		\le
		\sum\limits_{n=1}^{d} \left\{I_n \| (I-C_nC_n^{\dagger})X_{(n)} \|^2\right\}.$$
	Let $\mathbf{s}$ be the indices obtained by performing the L-DEIM to the right singular matrices $W_n$ of $X_{(n)}$ and set $S=I(:,\mathbf{s})$.
	Denote the interpolatory projectors $\mathbb{S}=S(W_{n}^{\mathrm{T}}S)^{\dagger}W_n$.
	Combining the result in \cite[Lemma 3]{hendryx2021extended} that
	$	\left\|(I-C_nC_n^{\dagger})X_{(n)}\right\| \le \left\| X_{(n)}(I-\mathbb{S}) \right\|$ with \cite[Lemma 2]{hendryx2021extended}, we have 
	\begin{equation*}
		\begin{aligned}
			\left\|(I-C_nC_n^{\dagger})X_{(n)}\right\|
			\le&
			\left\|X_{(n)}(I-\mathbb{S})\right\|
			=
			\left\|X_{(n)}(I-W_n W_n^{\mathrm{T}})(I-\mathbb{S})\right\|
			\\
			\le&
			\left\|X_{(n)}(I-W_n W_n^{\mathrm{T}})\right\|  \left\|I-\mathbb{S}\right\|
			=
			\left\|I-\mathbb{S}\right\| \sigma_{\widehat{r}_n+1}
           \\ 
           = &\|(W_n^{\mathrm{T}}S)^{\dagger}\|,
		\end{aligned}
	\end{equation*}
where we use the fact that $\left\|I-\mathbb{S}\right\|=\left\|\mathbb{S}\right\|=\|(W_n^{\mathrm{T}}S)^{\dagger}\|$ for $\mathbb{S}\neq 0$ or $I$ \cite{szyld2006many}.
Applying the result in \cite{gidisu2022rsvd} that $\left\|(W_n^{\mathrm{T}}S)^{\dagger}\right\|  \le  \sqrt{\frac{\widehat{r}_n\cdot\prod_{k\neq n}I_k}{3}}2^{\widehat{r}_n},$
we obtain the desired result.
\end{proof}
	\section{Randomization for HOID}
	\hskip 2em
	During the process of the DEIM and L-DEIM based HOID, the main cost lies in calculating the singular vectors of each mode tensor unfolding.
	When dealing with large-scale problems, the leading singular vectors can be effectively computed using 
	the randomized algorithms \cite{che2021JSCefficient,martinsson2011ACHArandomized}.
	Motivated by this success, in this section, utilizing the random sampling methods \cite{martinsson2011ACHArandomized}, we develop the randomized algorithms for computing the HOID based on the two sampling procedures.
	Moreover, we consider the scenario where the dimension of the input tensor is constrained to dimension 2, that is, matrices and provide a fast randomized algorithm for matrix CUR decomposition.
	\subsection{Randomization for DEIM based HOID}
	\hskip 2em
	The computation of the complete SVD of a matrix $X\in\mathbb{R}^{m\times n}$ costs $\mathcal{O}\left(n m^{2}\right)$, assuming $n \ge m$.
	However, this computational expense can be prohibitively high when the dimensions are large.
	The randomized SVD algorithm, developed in \cite{martinsson2011ACHArandomized}, provides a simple and efficient technique for generating an accurate approximation of the SVD for a given matrix, which comprise two distinct stages.
	
	\hskip 2em
	During the first stage, we multiply $X$ by a Gaussian random matrix with entries having zero mean and unit variance. 
	This results in a set of random linear combinations of the rows of $X$.
	Subsequently, we construct a matrix $Q$ that approximates the range of $X$, thereby yielding the approximation $X \approx XQQ^{\mathrm{T}}$.
	In the second stage, we compute a thin SVD of the much smaller matrix
	$XQ=V\Sigma U^\mathrm{T}$.
	We then truncate the decomposition to the desired rank, and compute $W=QU$ to obtain the approximated right singular matrices.
	The operations yield the rank-$r$ approximation
	$X \approx XQQ^{\mathrm{T}} = V \Sigma W^{\mathrm{T}},$
	and the error bound
	\begin{equation}\label{Err bound for the randSVD}
		\left\|V \Sigma W^{\mathrm{T}}-X\right\| \leqslant
		\left(2 \sqrt{2 (r+p) m \beta^{2} \gamma^{2}+1}+2 \sqrt{2 (r+p) m} \beta \gamma\right) \sigma_{r+1}
	\end{equation}
	holds with probability not less than
	\begin{equation}\label{Pro for the RandSVD}
		\chi=1-
		\frac{1}{\sqrt{2 \pi(l-r+1)}}
		\left(\frac{e}{(l-r+1) \beta}\right)^{l-r+1}
		-
		\frac{1}{2\left(\gamma^{2}-1\right) \sqrt{\pi m \gamma^{2}}}
		\left(\frac{2 \gamma^{2}}{e^{\gamma^{2}-1}}\right)^{m},
	\end{equation}
	where $\sigma_{r+1}$ is the $(r+1)$th largest singular value of $X$, $l$ is a user-specified integer with $l = r+p$, and $p$ is the oversampling parameter utilized to augment the number of columns in order to enhance the flexibility of the computational method, $\beta$ and  $\gamma$ are positive real numbers such that $\gamma > 1$.
	To illustrate the use of these parameters, we choose $\beta = 3/4$, $\gamma^2 = 5$, and $p = 20$. With this choice, we can derive the above error bound, which holds with probability not less than $1 - 10^{-17}$.
	\cite[Table 1]{martinsson2011ACHArandomized}  presents similar results obtained by varying the values of $l$, $\beta$, and $\gamma$.
	
	\hskip 2em
	We summarize our randomized approach in Algorithm \ref{Al-R-DEIM-HOID}, where we obtain an approximation $\widehat{\mathcal{X}}$ of given tensor $\mathcal{X}$ in a CUR-type Tucker format
	\begin{equation*}
		\widehat{\mathcal{X}}
		=\mathcal{G} \times_{1} C_{1} \times_{2} C_{2} \cdots \times_{d} C_{d}.
	\end{equation*}
	In Algorithm \ref{Al-R-DEIM-HOID}, we leverage the randomization techniques in \cite{martinsson2011ACHArandomized} to expedite the SVD process and acquire the singular vectors of mode tensor unfolding $X_{(n)}$ for $n=1,2,\ldots,d$.
	Next, we exploit the DEIM index selection procedure, operating on the approximate singular vector matrices to identify the selection fibers and construct matrices $\{C_n\}_{n=1}^d$.
	Compared to the randomized approach proposed in \cite{saibaba2016hoidSIAM}, which performs well in numerical results, however, without a complete error analysis, we establish a detailed probabilistic error analysis for our randomized algorithm.

	\begin{algorithm}[htb]
		\caption{Randomized DEIM based HOID}
		\label{Al-R-DEIM-HOID}
		\hspace*{0.02in} {\bf Require:}
		$\mathcal{X}\in\mathbb{R}^{I_1\times I_2 \times \cdots \times I_d}$, multilinear rank $(r_1,r_2,\ldots,r_d)$ and oversampling parameter $p$.
		\begin{algorithmic}[1]
			\For{$n=1,2,\ldots,d$}
			\State{Draw random Gaussian matrix $\Omega\in\mathbb{R}^{(r_n+p)\times I_n}$.}
			\State{Compute $Y=\Omega X_{(n)}\in\mathbb{R}^{(r_n+p)\times \prod_{k\neq n}I_k}$.}
			\State Compute the SVD of $Y^{\mathrm{T}}$,
				$	Y^{\mathrm{T}}=ZMK^{\mathrm{T}}$,
			\State where $Z\in\mathbb{R}^{\prod_{k\neq n}I_k \times (r_n+p)}$ and
			$W\in\mathbb{R}^{(r_n+p) \times (r_n+p)}$ are orthonormal, and
			$\Sigma\in\hspace*{0.20in}\mathbb{R}^{(r_n+p)\times (r_n+p)}$
			is diagonal.
			\State Form $Q=Z(:,1:r_n)$.
			\State Compute $T=X_{(n)}Q$.
			\State Compute the SVD of $T$,
				$	T=V\Sigma U^{\mathrm{T}}$,
			\State where $V\in\mathbb{R}^{I_n \times r_n}$ and
				$U\in\mathbb{R}^{r_n \times r_n}$ are orthonormal, and
				$\Sigma\in\mathbb{R}^{r_n \times r_n}$ is diagonal.
			\State Compute $W_n=Q U\in\mathbb{R}^{\prod_{k\neq n}I_k \times r_n}$.
				\For{$j=1,2,\ldots,r_n$}
					\State
						$\mathbf{s}(j)=\operatorname{argmax}_{1 \leq i \leq \prod_{k\neq n}I_k}\left|\left(W_{n}(i, j)\right)\right|$.
					\State
						$W_{n}(:, j+1)=W_{n}(:, j+1)-W_{n}(:, 1: j) \cdot\left(W_{n}(\mathbf{s}, 1: j) \backslash W_{n}(\mathbf{s}, j+1)\right)$.
				\EndFor
			\State $C_n=X_n(:,\mathbf{s})$.
			\EndFor
			\State{Compute the core tensor
				$\mathcal{G}\in\mathbb{R}^{r_1\times r_2\times\cdots\times r_d}$ as
				$	\mathcal{G}=\mathcal{X}
					\times_1C_1^{\dagger} \times_{2} C_{2}^{\dagger} \cdots \times_{d} C_d^{\dagger}$.
				\State {\bf return} Tucker decomposition
				$\widehat{\mathcal{X}}=\mathcal{G}\times_1C_1 \times_{2} C_{2}\cdots\times_dC_d$.
			}
		\end{algorithmic}
	\end{algorithm}
	\begin{theorem}\label{ERR for the R-DEIM-HOID}
			Let $\mathcal{X}\in\mathbb{R}^{I_1\times I_2 \times \cdots \times I_d}$
			with $I_n\le\prod_{k\neq n}I_k$ for $n=1,2\ldots,d$, and
			$\widehat{\mathcal{X}}$ be an approximation for $\mathcal{X}$ provided by Algorithm \ref{Al-R-DEIM-HOID}.
			Suppose that $p$ is an oversampling parameter, $\beta$ and $\gamma$ are positive numbers such that $\gamma>1$, and
            $\phi=\prod_{n=1}^d \chi_n$ with
		\begin{equation*}
			\chi_n=1-\frac{1}{\sqrt{2\pi(p+1)}}\left(\frac{e}{(p+1)\beta}\right)^{p+1}
			-\frac{1}{2(\gamma^2-1)\sqrt{\pi I_n\gamma^2}}\left(\frac{2\gamma^2}{e^{\gamma^2-1}}\right)^{I_n}.
		\end{equation*}
		Then
	\begin{equation*}
		\begin{aligned}
			\|\mathcal{X}-\hat{\mathcal{X}}\|_F^2\le
			\sum\limits_{n=1}^d
			\Bigg[
			I_n
			\left(\prod_{k\neq n}I_k\right)
			\left(
			\frac{r_n \cdot 4^{r_n}}{3}
			\right)
			\left(
			2\sqrt{2(r_n+p)I_n\beta^2\gamma^2+1}
			+2\sqrt{2(r_n+p)I_n}\beta\gamma
			\right)^2
			\Bigg]
			\sigma_{r_n+1}^2
		\end{aligned}
	\end{equation*}
		holds with probability not less than $\phi$, where $\sigma_{r_n+1}$ is the $(r_n+1)$th largest singular value of the mode-$n$ tensor unfolding $X_{(n)}$.
	\end{theorem}
	\begin{proof}
		First, from Lemma \ref{convert tensor norm to matrix}, we have
		\begin{equation}\label{Proof for R-DEIM HOID eq1}
			\left\|\mathcal{X}- \mathcal{G} \times_{1} C_{1} \times_{2} C_{2} \cdots \times_{d} C_{d} \right\|_F^2
			\le
			\sum_{n=1}^{d} \left(I_n \| (I-C_nC_n^{\dagger})X_{(n)} \|_2^2\right).
		\end{equation}
	 According to Algorithm \ref{Al-R-DEIM-HOID}, for $X_{(n)}$, $n=1,2,\ldots,d$, we have the approximate SVD
	 \begin{equation*}
	 	X_{(n)}=V_n \Sigma W_n^{\mathrm{T}} + E_n,
	 \end{equation*}
	 where $W_n$ contains $r_n$ approximated right singular vectors, and the error $\left\|E_n\right\|$ satisfies (\ref{Err bound for the randSVD}) with probability not less than (\ref{Pro for the RandSVD}).
	 Suppose that the column indices $\mathbf{s}$ give the full rank matrices $C=X_{(n)}S$ where $S=I(:,\mathbf{s})$, and let $\mathbb{S}=S(W_n^{\mathrm{T}}S)^{-1}W_n^{\mathrm{T}}$ be the interpolatory projectors. 
    Then, using the result in \cite[Lemma 4.2]{sorensen2016deimSIAM}, we have
	\begin{equation*}
		\left\| (I-CC^{\dagger})X_{(n)} \right\|
		\le
		\left\| X_{(n)}(I-\mathbb{S}) \right\|.
	\end{equation*}
	Note that $W_n^{\mathrm{T}}W_n=I$. According to \cite[Lemma 4.1]{sorensen2016deimSIAM}, we obtain that
	\begin{equation*}
		\left\|  X_{(n)}(I-\mathbb{S}) \right\|
		\le
		\left\| (W_n^{\mathrm{T}}S)^{-1} \right\|
		\left\| X_{(n)}  (I-W_n W_n^{\mathrm{T}})\right\|.
	\end{equation*}
	Then it follows that
	\begin{equation*}
		\begin{aligned}
			\left\| (I-CC^{\dagger})X_{(n)} \right\|
			\le&
			\left\| (W_n^{\mathrm{T}}S)^{-1} \right\|
			\left\| X_{(n)} (I-W_n W_n^{\mathrm{T}})  \right\|
			\\ =&
			\left\| (W_n^{\mathrm{T}}S)^{-1} \right\|
			\left\| (E_n+V_n\Sigma W_n^{\mathrm{T}}) (I-W_n W_n^{\mathrm{T}}) \right\|
			\\ \le&
			\left\| (W_n^{\mathrm{T}}S)^{-1} \right\|
			\left\| E_n (I-W_nW_n^{\mathrm{T}}) \right\|
			\le
			\left\| (W_n^{\mathrm{T}}S)^{-1} \right\|
			\left\| E_n \right\|.
		\end{aligned}
	\end{equation*}
	For the DEIM, it is shown in \cite[Lemma 4.4]{sorensen2016deimSIAM} that
	$\left\| (W_n^{\mathrm{T}}S)^{-1} \right\| \le \sqrt{\frac{r_n \prod_{k\neq n}I_k}{3}}2^{r_n}$,
	which implies that
	\begin{equation}\label{Proof for R-DEIM HOID eq2}
		\left\| (I-CC^{\dagger})X_{(n)} \right\|
		\le
		\sqrt{\frac{r_n \prod_{k\neq n}I_k}{3}}2^{r_n}
		\left(2 \sqrt{2(r_n+p) I_n \beta^{2} \gamma^{2}+1}+2 \sqrt{2(r_n+p)I_n} \beta \gamma\right) \sigma_{r_n+1}
	\end{equation}
	for $n=1,2,\ldots,d$ with probability not less than
	\begin{equation*}\label{Proof for R-DEIM HOID eq3}
		\chi_n=1
		-\frac{1}{\sqrt{2 \pi(p+1)}} \left(\frac{e}{(p+1) \beta}\right)^{p+1}
		-\frac{1}{2\left(\gamma^{2}-1\right) \sqrt{\pi I_n \gamma^{2}}}
		\left(\frac{2 \gamma^{2}}{e^{\gamma^{2}-1}}\right)^{I_n}.
	\end{equation*}
	Setting $\phi=\prod_{n=1}^d \chi_n$ and inserting relation
	 (\ref{Proof for R-DEIM HOID eq2}) into (\ref{Proof for R-DEIM HOID eq1}), we obtain the the desired error bound.
	\end{proof}

	\hskip 2em
	As pointed out in \cite{saibaba2016hoidSIAM}, given matrix $X\in\mathbb{R}^{m \times n}$, the relationship between the HOID and the matrix CUR factorization can be established effortlessly by recognizing the subsequent identity:
	\begin{equation}\label{relation of CUR and HOID eq1}
		X=CUR+E \iff X=U\times_{1} C \times_{2} R+E,
	\end{equation}
	where matrices $C\in\mathbb{R}^{m \times r}$ and $R\in\mathbb{R}^{r \times n}$ are formed by extracting the rows/columns of $X$.
	By adopting this specific intersection matrix, the correlation with the core tensor calculation can be derived:
	\begin{equation}\label{relation of CUR and HOID eq2}
		U=C^{\dagger} X R^{\dagger} \iff U=X \times_1 C^{\dagger} \times_2 R^{\dagger}.
	\end{equation}
	From relations (\ref{relation of CUR and HOID eq1}) and (\ref{relation of CUR and HOID eq2}), it becomes evident that Algorithm \ref{Al-R-DEIM-HOID} can also be applied to produce a matrix CUR decomposition and we summarize the error bound in the following corollary.
	\begin{Corollary}
		Apply Algorithm \ref{Al-R-DEIM-HOID} to produce the CUR decomposition of $X\in\mathbb{R}^{m\times n}$, $n\ge m$ as in (\ref{relation of CUR and HOID eq1}). Then
		\begin{equation*}
			\left\| E \right\|
			\le
			\left(\sqrt{\frac{mr}{3}}2^r+\sqrt{\frac{nr}{3}}2^r\right)
			\left(2 \sqrt{2(r+p) m \beta^{2} \gamma^{2}+1}+2 \sqrt{2(r+p)m} \beta \gamma\right)
			\sigma_{r+1}
		\end{equation*}
	with success probability not less than $\chi^2$.
	\end{Corollary}
	\subsection{Randomization for L-DEIM based matrix CUR decomposition}
	\hskip 2em 	
	We now focus on the integration of random sampling techniques with the L-DEIM algorithm, a combination that can yield good bounds with high probability at a trivial computational cost.
	To develop a framework for our randomized approaches, firstly, we consider the matrix case
 and derive a randomized algorithm for the matrix CUR decomposition of the form (\ref{matrix CUR}).
	
	\hskip 2em
	Suppose the selected indices are stored in the vectors $\mathbf{q},\mathbf{p}\in\mathbb{N}^{r}$ so that
	$C = X(:,\mathbf{q})$ and $R = X(\mathbf{p}, :)$.
	Our choice for $\mathbf{p}$ and $\mathbf{q}$ is guided by information of the approximate rank-$\widehat{r}$ SVD of $X\in\mathbb{R}^{m\times n}$ such that
	\begin{equation}\label{RSVD for LDEIM M-CUR}
		X\approx V\Sigma W^{\mathrm{T}},
	\end{equation}
	where matrices $W,V$ contain the leading $\widehat{r}$ right and left singular vectors and $\widehat{r}\le r$ is the user-specified parameter contained in the L-DEIM algorithm.
	Furthermore, we compute decomposition (\ref{RSVD for LDEIM M-CUR}) by applying the randomized technique, achieving the error (\ref{Err bound for the randSVD}) with probability not less than (\ref{Pro for the RandSVD}).
	Then we compute
	\begin{equation}\label{Def of U}
		U=C^{\dagger}XR^{\dagger},
	\end{equation}
	yielding a CUR factorization by two steps:
	first, the columns of $X$ are projected onto the range of $C$ $\mathrm{Ran}(C)$; then the result is projected onto the row space of $R$.
	This option minimizes $\|X-CUR\|$ for the given the sampling indices.

	\hskip 2em
	Lines $1$ to $9$ of Algorithm \ref{Al-R-LDEIM-CUR} correspond to the construction of rank-$\widehat{r}$ truncated SVD of $X$.
	Additionally, in line $2$, we multiply the matrix $X$ by an $(\widehat{r}+p)\times n$ Gaussian matrix $\Omega$ to implement truncation, and it would increase to  $(r+p)\times n$ if we apply the DEIM, which can be easily observed from line 3 of Algorithm \ref{Al-R-DEIM-HOID}.
	Therefore, by exploiting the L-DEIM technique, the random sampling procedure can be executed very efficiently by achieving a better truncation, which is the primary
	source of the excellent performance of our approach.
	Besides, it is worth noting that lines 10 to 19 can be parallelized, as it involves three independent runs of L-DEIM, which operate on the singular vector matrices $W$ and $V$ to select the row indices $\mathbf{q}$ and column indices $\mathbf{p}$ respectively.
	The following theorem quantifies the error of the rank-$r$ CUR decomposition produced by Algorithm \ref{Al-R-LDEIM-CUR}.
	
	\begin{algorithm}[htb]
		\caption{Randomized L-DEIM based CUR decomposition}
		\label{Al-R-LDEIM-CUR}
		\hspace*{0.02in} {\bf Require:}
		$X\in\mathbb{R}^{m\times n}$, desired rank $r$ and the specified parameter $\widehat{r}$.
		\begin{algorithmic}[1]
		\State Draw random Gaussian matrix $\Omega\in\mathbb{R}^{(\widehat{r}+p)\times n}$.
		\State Compute $Y=\Omega X\in\mathbb{R}^{(\widehat{r}+p)\times n}$.
		\State Compute the SVD of $Y^{\mathrm{T}}$,
		$	Y^{\mathrm{T}}=ZMK^{\mathrm{T}}$,
		\State where $Z\in\mathbb{R}^{n \times (\widehat{r}+p)}$ and
		$K\in\mathbb{R}^{(\widehat{r}+p) \times (\widehat{r}+p)}$ are orthonormal, and
		$M\in\mathbb{R}^{(\widehat{r}+p)\times (\widehat{r}+p)}$
		is diagonal.
		\State Form $Q=Z(:,1:\widehat{r})$.
		\State Compute $T=XQ \in\mathbb{R}^{m \times \widehat{r}}$.
		\State Compute the SVD of $T$,
			$T=V\Sigma U^{\mathrm{T}}$,
		\State where $V\in\mathbb{R}^{m \times \widehat{r}}$ and
		$U\in\mathbb{R}^{\widehat{r} \times \widehat{r}}$ are orthonormal, and
		$\Sigma$$\in\mathbb{R}^{\widehat{r} \times \widehat{r}}$ is diagonal.
		\State Compute $W=Q U\in\mathbb{R}^{n \times \widehat{r}}$.
		\For{$j=1,2,\ldots,\widehat{r}$}
			\State $\mathbf{p}(j)=\operatorname{argmax}_{1 \leq i \leq m}\left|(V(i, j))\right|$.
			\State $V(:, j+1)=V(:, j+1)-V(:, 1: j) \cdot(V(\mathbf{p}, 1: j) \backslash V(\mathbf{p}, j+1))$.
		\EndFor
		\State Compute $\ell_{i}=\left\|V(i,:)\right\| \quad$ for $i=1, 2,\ldots, m$.
		\State Sort $\ell$ in non-increasing order.
		\State Remove entries in $\ell$ corresponding to the indices in $\mathbf{p}$.
		\State $\mathbf{p}^{\prime}=r-\widehat{r}$ indices corresponding to $r-\widehat{r}$ largest entries of $\ell$.
		\State $\mathbf{p}=\left[\mathbf{p} ; \mathbf{p}^{\prime}\right]$.
		\State Repeat step 10-18 for $W$ to obtain index set $\mathbf{q}$.
		\State Form $C=X(:,\mathbf{q})$ and $R=X(\mathbf{p},:)$.
		\State Compute $U=C^{\dagger}XR^{\dagger}$.
		\State {\bf return} CUR decomposition
		$X \approx CUR$.
		\end{algorithmic}
	\end{algorithm}

		\begin{theorem}\label{Err of R-LDEIM CUR}
		Let $X\in\mathbb{R}^{m\times n}$ with $n\ge m$. Suppose that $p$ is an oversampling parameter, $\beta$ and $\gamma$ are positive numbers such that $\gamma > 1 $, and
		\begin{equation*}
			\chi=1-\frac{1}{\sqrt{2\pi(p+1)}}\left(\frac{e}{(p+1)\beta}\right)^{p+1}
				-\frac{1}{2(\gamma^2-1)\sqrt{\pi m\gamma^2}}\left(\frac{2\gamma^2}{e^{\gamma^2-1}}\right)^{m}.
		\end{equation*}
	Then

		\begin{equation*}
			\left\|X-CUR\right\|\le
			2^{\widehat{r}}
			\left(\sqrt{\frac{n\widehat{r}}{3}}+\sqrt{\frac{m\widehat{r}}{3}}\right)
			\left(
			2\sqrt{2\left(\widehat{r}+p\right)m\beta^2\gamma^2+1}
			+2\sqrt{2\left(\widehat{r}+p\right)m}\beta\gamma
			\right)\sigma_{\widehat{r}+1}
		\end{equation*}
	holds with probability not less than $\chi^2$, where $\sigma_{\widehat{r}+1}$ is the $(\widehat{r}+1)$th largest singular value of $X$.
	\end{theorem}
	\begin{proof}
	This proof is a minor modification of that of \cite[Lemma 4.2]{sorensen2016deimSIAM}.
	Here we closely follow their proof technique.
	From the definition of $U$ of (\ref{Def of U}),
	\begin{equation*}
	X-CUR=X-CC^{\dagger}XR^{\dagger}R=(I-CC^{\dagger})A+CC^{\dagger}X(I-R^{\dagger}R).
	\end{equation*}
Then we have
\begin{equation}\label{Proof for R-L-DEIM CUR eq1}
	\begin{aligned}
		\left\|X-CUR\right\|
		\le&
		\left\|(I-CC^{\dagger})X\right\|+ \left\|CC^{\dagger}\right\| \left\|X(I-R^{\dagger}R)\right\|
		\\
		=&
		\left\|(I-CC^{\dagger})X\right\|+ \left\|X(I-R^{\dagger}R)\right\|,
	\end{aligned}
\end{equation}
since $\left\|CC^{\dagger}\right\|=1$.
Let $P=I(:,\mathbf{p})$, $Q=I(:,\mathbf{q})$, and $\mathbb{P}=V(P^{\mathrm{T}}V)^{\dagger}P^{\mathrm{T}}$, $\mathbb{Q}=Q(W^\mathrm{T}Q)^{\dagger}W^{\mathrm{T}}$ be the interpolatory projectors.
 Using the formula $C=X(:,\mathbf{q})=XQ$, we have
\begin{equation*}
	C^{\dagger}=(C^{\mathrm{T}}C)^{-1}C^{\mathrm{T}}
	=(Q^{\mathrm{T}}X^{\mathrm{T}}XQ)^{-1}(XQ)^{\mathrm{T}},
\end{equation*}
and then the orthogonal projection of $X$ onto $\mathrm{Ran}(C)$ is
\begin{equation*}
	CC^{\dagger}X
	=XQQ^{\mathrm{T}}X^{\mathrm{T}}XQ^{-1}Q^{\mathrm{T}}X^{\mathrm{T}}X.
\end{equation*}
Hence the error in the orthogonal projection of $X$ is
\begin{equation*}
	(I-CC^{\dagger})X=X(I-\Phi),\quad
	\Phi=Q(Q^{\mathrm{T}}X^{\mathrm{T}}XQ)^{-1}Q^{\mathrm{T}}X^{\mathrm{T}}X.
\end{equation*}
It is easy to verify that $\Phi Q=Q$.
Therefore, we obtain
\begin{equation*}
	\Phi\mathbb{Q}=\Phi Q(W^{\mathrm{T}}W)^{\dagger}W^{\mathrm{T}}=Q(W^{\mathrm{T}}Q)^{\dagger}W^{\mathrm{T}}
	=\mathbb{Q},
\end{equation*}
which implies that
\begin{equation*}
	X(I-\Phi)=X(I-\Phi)(I-\mathbb{Q})=(I-CC^{\dagger})X(I-\mathbb{Q}).
\end{equation*}
Then it follows that
\begin{equation}\label{Proof for R-L-DEIM CUR eq2}
	\begin{aligned}
		\left\| (I-CC^{\dagger})X \right\|
		=&
		\left\| X(I-\Phi) \right\|
		\\
		=&
		\left\| (I-CC^{\dagger})X(I-\mathbb{Q}) \right\|
		\\
		\le&
		\left\|I-CC^{\dagger}\right\|  \left\|X(I-\mathbb{Q})\right\|
		=
		\left\|X(I-\mathbb{Q})\right\|.
	\end{aligned}
\end{equation}
Analogous manipulation gives
\begin{equation}\label{Proof for R-L-DEIM CUR eq3}
	\left\|X(I-R^{\dagger}R)\right\| \le \left\|(I-\mathbb{P})X\right\|.
\end{equation}
Note that oblique projectors $\mathbb{P}$ and $\mathbb{Q}$ have the properties
$\mathbb{P}V=V$ and $W^{\mathrm{T}}\mathbb{Q}=W^{\mathrm{T}}$, so that
$(I-\mathbb{P})V=0$ and $W^{\mathrm{T}}(I-\mathbb{Q})=0$.
Therefore,
\begin{equation}\label{Proof for R-L-DEIM CUR eq4}
	\left\|X-\mathbb{P}X\right\|
	=\left\|(I-\mathbb{P})X\right\|
	=\left\|(I-\mathbb{P})(I-VV^{\mathrm{T}})X\right\|
	\le
	\left\|(I-VV^{\mathrm{T}})X\right\| \left\|(I-\mathbb{P})\right\|,
\end{equation}
\begin{equation}\label{Proof for R-L-DEIM CUR eq5}
	\left\|X-X\mathbb{Q}\right\|
	=\left\|X(I-\mathbb{Q})\right\|
	=\left\|X(I-WW^{\mathrm{T}})(I-\mathbb{Q})\right\|
	\le
	\left\|X(I-WW^{\mathrm{T}})\right\| \left\|(I-\mathbb{Q})\right\|.
\end{equation}
According to the description of the randomized SVD, the error $E$ between matrix $X\in \mathbb{R}^{m\times n}$ and its approximation satisfies the following inequality
\begin{equation*}
	\left\|E\right\|=\left\|X-XQQ^{\mathrm{T}}\right\|\le
	\left(2 \sqrt{2 (\widehat{r}+p)m \beta^{2} \gamma^{2}+1}+2 \sqrt{2 (\widehat{r}+p) m } \beta \gamma\right) \sigma_{\widehat{r}+1}
\end{equation*}
with probability not less than $\chi$ as defined in (\ref{Err bound for the randSVD}).
Therefore,
\begin{equation}\label{Proof for R-L-DEIM CUR eq6}
	\begin{aligned}
		\left\|(I-VV^{\mathrm{T}})X\right\|
		=&
		\left\|(I-VV^{\mathrm{T}})(XQQ^{\mathrm{T}}+E)\right\|
		\\
		\le &
		\left\|(I-VV^{\mathrm{T}})XQQ^{\mathrm{T}}+(I-VV^{\mathrm{T}})E\right\|
		\\
		\le&
		\left\|(I-VV^{\mathrm{T}})XQQ^{\mathrm{T}}\right\|+\left\|E\right\|
		= \left\|E\right\|,
	\end{aligned}
\end{equation}
since $(I-VV^{\mathrm{T}})XQQ^{\mathrm{T}}=(I-VV^{\mathrm{T}})V\Sigma W^{\mathrm{T}}=0$.
 A similar treatment shows that
\begin{equation}\label{Proof for R-L-DEIM CUR eq7}
	\left\|X(I-WW^{\mathrm{T}})\right\|\le \left\|E\right\|
\end{equation}
with probability not less than $\chi$.
Finally, combining the results from \cite{szyld2006many} and \cite{gidisu2022rsvd} that
\begin{equation*}
	\left\|I-\mathbb{P}\right\|
	=\left\|\mathbb{P}\right\|
	=\left\|(P^{\mathrm{T}}V)^{\dagger}\right\|
	\le \sqrt{\frac{m \widehat{r}}{3}} 2^{\widehat{r}},
	\\
	\left\|I-\mathbb{Q}\right\|
	=\left\|\mathbb{Q}\right\|
	=\left\|(W^{\mathrm{T}}Q)^{\dagger}\right\|
	\le \sqrt{\frac{n \widehat{r}}{3}} 2^{\widehat{r}},
\end{equation*}
and the relations (\ref{Proof for R-L-DEIM CUR eq1})-(\ref{Proof for R-L-DEIM CUR eq7}), we obtain the desired error bound.
\end{proof}

	\subsection{Randomization for L-DEIM based HOID}
	In this subsection, we design an efficient randomized algorithm for computing a CUR-type factorization for tensors in the Tucker format based on the L-DEIM procedure, which can be viewed as a generalization of Algorithm \ref{Al-R-LDEIM-CUR}.
	In this circumstance, each mode of tensor $\mathcal{X}\in \mathbb{R}^{I_1\times I_2\times\cdots \times I_d}$ is processed separately.
	Specifically, the factor matrices $C_n=X_{(n)}(:,\mathbf{s})$ are constructed by extracting $r_n$ columns from the $n$-mode unfolding $X_{(n)}$,
	where $\mathbf{s}$ represents the index sets of the selected columns.
	The selection of $\mathbf{s}$ is achieved by applying the L-DEIM algorithm to the approximate right singular matrices $W_n$,
	computed by employing the random sampling method as described in Section 4.1.
	Once all factor matrices ${\{C_n\}}_{n=1}^d$ are obtained, the core tensor is formed as
	\begin{equation*}
		\mathcal{G}=\mathcal{X}\times_1 C_1^{\dagger}\times_2 C_2^{\dagger}\cdots \times_d C_d^{\dagger}.
	\end{equation*}
	Algorithm \ref{Al-R-LDEIM-HOID} is a summary of this procedure and it has several advantages:
	(1) it returns a HOID factorization that is known to be more interpretable than the HOSVD as it corresponds to representing data via other actual data points;
	(2) it has a computational advantage: 
	the main cost of  the random truncation process presented in lines 2 to 10 of Algorithm \ref{Al-R-DEIM-HOID} comes from the SVD computation and it needs
	$\mathcal{O}\left((r_i+p)^2\prod_{k\neq i}I_k+I_i{r_i}^2\right)$. Since it requires fewer singular vectors in Algorithm \ref{Al-R-LDEIM-HOID}, it reduces to
	$\mathcal{O}\left((\widehat{r}_i+p)^2\prod_{k\neq i}I_k+I_i{\widehat{r}_i}^2\right)$.
	(3) there is a good theoretical guarantee for its performance, and we establish it in the following theorem.
	
\begin{algorithm}[htb]
	\caption{Randomized L-DEIM based HOID}
	\label{Al-R-LDEIM-HOID}
	\hspace*{0.02in} {\bf Require:}
	$\mathcal{X}\in\mathbb{R}^{I_1\times I_2\times\cdots \times I_d}$, multilinear rank $(r_1,r_2,\ldots,r_d)$ and parameters $(\widehat{r}_1,\widehat{r}_2,\ldots,\widehat{r}_d)$.
	\begin{algorithmic}[1]
		\For{$n=1,2,\ldots,d$}
		\State Draw random Gaussian matrix $\Omega\in\mathbb{R}^{(\widehat{r}_n+p)\times I_n}$.
		\State Compute $Y=\Omega X_{(n)}\in\mathbb{R}^{(\widehat{r}_n+p)\times \prod_{k\neq n}I_n}$.
		\State Compute the SVD of $Y^{\mathrm{T}}$,
		$	Y^{\mathrm{T}}=ZMK^{\mathrm{T}}$,
		\State where $Z\in\mathbb{R}^{\prod_{k\neq n}I_k \times (\widehat{r}_n+p)}$ and
		$W\in\mathbb{R}^{(\widehat{r}_n+p) \times (\widehat{r}_n+p)}$ are orthonormal, and
		$\Sigma\in\hspace*{0.20in}\mathbb{R}^{(\widehat{r}_n+p)\times (\widehat{r}_n+p)}$
		is diagonal.
		\State Form $Q=Z(:,1:\widehat{r}_n)$.
		\State Compute $T=X_{(n)}Q$.
		\State Compute the SVD of $T$,
		$	T=V\Sigma U^{\mathrm{T}}$,
		\State where $V\in\mathbb{R}^{I_n \times \widehat{r}_n}$ and
		$U\in\mathbb{R}^{\widehat{r}_n \times \widehat{r}_n}$ are orthonormal, and
		$\Sigma$$\in\mathbb{R}^{\widehat{r}_n \times \widehat{r}_n}$ is diagonal.
		\State Compute $W_n=Q U\in\mathbb{R}^{\prod_{k\neq n}I_k \times \widehat{r}_n}$
		\For{$j=1,2,\ldots,\widehat{r}_n$}
		\State $\mathbf{s}(j)=\operatorname{argmax}_{1 \leq i \leq \prod_{k\neq n}I_k}\left|(W_{n}(i, j))\right|$.
		\State $W_{n}(:, j+1)=W_n(:, j+1)-W_n(:, 1: j) \cdot(W_n(\mathbf{s}, 1: j) \backslash W_n(\mathbf{s}, j+1))$.
		\EndFor
		\State Compute $\ell_{i}=\left\|W_n(i,:)\right\| \quad$ for $i=1,2, \ldots, I_n$.
		\State Sort $\ell$ in non-increasing order.
		\State Remove entries in $\ell$ corresponding to the indices in $\mathbf{s}$.
		\State $\mathbf{s}^{\prime}=r_n-\widehat{r}_n$ indices corresponding to $r_n-\widehat{r}_n$ largest entries of $\ell$.
		\State $\mathbf{s}=\left[\mathbf{s} ; \mathbf{s}^{\prime}\right]$.
		\State Form $C_n=X_{(n)}(:,\mathbf{s})$.
		\EndFor
		\State Compute core tensor
		$\mathcal{G}\in\mathbb{R}^{r_1\times\cdots \times r_d}$ as
		$	\mathcal{G}=\mathcal{X}\times_1 C^{\dagger}_1\times_2 C^{\dagger}_2\cdots \times_d C^{\dagger}_d$.
		\State {\bf return} Tucker decomposition
		$\mathcal{X}\approx\mathcal{G}\times_1 C_1   \times_2 C_2  \cdots\times_d \mathbf{C}_{d}$.
	\end{algorithmic}
\end{algorithm}
\begin{theorem}
	Let $\mathcal{X}\in\mathbb{R}^{I_1\times I_2\times \cdots \times I_d}$
	with $I_n\le\prod_{k\neq n}I_k$ for $n=1,2,\ldots,d$.
	Suppose that $p$ is an oversampling parameter, $\beta$ and $\gamma$ are positive numbers such that $\gamma>1$, and $\phi=\prod_{n=1}^d{\chi_n}$ with
		\begin{equation*}
			\chi_n=1-\frac{1}{\sqrt{2\pi(p+1)}}\left(\frac{e}{(p+1)\beta}\right)^{p+1}
			-\frac{1}{2(\gamma^2-1)\sqrt{\pi I_n\gamma^2}}\left(\frac{2\gamma^2}{e^{\gamma^2-1}}\right)^{I_n}.
		\end{equation*}
	Then Algorithm \ref{Al-R-LDEIM-HOID} provides a multilinear rank $(r_1,r_2,\ldots,r_d)$ approximation for tensor
	$\mathcal{X}$ with the following error bound which holds with probability not less than $\phi$
\begin{equation*}
	\begin{aligned}
		\left\|\mathcal{X}-\mathcal{G} \times_{1} C_{1} \times_{2} C_{2}\cdots \times_{d} C_{d}\right\|_{F}^{2} \le&
		\sum\limits_{n=1}^d
		\Bigg[
		I_n
		\left(\prod_{k\neq n}I_k\right)
		\left(
		\frac{\widehat{r}_n \cdot 4^{\widehat{r}_n}}{3}
		\right)	
		\left(
		2\sqrt{2(\widehat{r}_n+p)I_n\beta^2\gamma^2+1}
		\right.\\
		&\left.
		+2\sqrt{2(\widehat{r}_n+p)I_n}\beta\gamma
		\right)^2	
		\Bigg]
		\sigma_{\widehat{r}_n+1}^2.
	\end{aligned}
\end{equation*}
	
\end{theorem}
\begin{proof}
	By Lemma \ref{convert tensor norm to matrix}, the error in  $\mathcal{X}$ is bounded by the sum of the error in each mode, i.e.,
	\begin{equation}\label{Proof for R-L-DEIM HOID eq2}		
				\|\mathcal{X}-\mathcal{G} \times_{1} C_{1} \times_{2} C_{2}\cdots \times_{d} C_{d}\|_F^2
				\le
				\sum_{n=1}^d \left( I_n \|(I-C_nC_n^{\dagger})X_{(n)}\|_2^2\right).
	\end{equation}
	Applying the results of Theorem \ref{Err of R-LDEIM CUR}, we have
\begin{equation}\label{Proof for R-L-DEIM HOID eq1}
	\begin{aligned}
		\|(I-C_nC_n^{\dagger})X_{(n)}\|^2
		\le&
		\left(\prod_{k\neq n}I_k\right)
		\left(\frac{\widehat{r}_n 4^{\widehat{r}_n}}{3}\right)
		\left(
		2\sqrt{2\left(\widehat{r}_n+p\right)I_n\beta^2\gamma^2+1}
		\right.\\
		&\left.
		+2\sqrt{2\left(\widehat{r}_n+p\right)I_n}\beta\gamma
		\right)^2
		\sigma_{\widehat{r}_n+1}^2,
	\end{aligned}
\end{equation}
with probability not less than
$	\chi_n=1-\frac{1}{\sqrt{2\pi(p+1)}}\left(\frac{e}{(p+1)\beta}\right)^{p+1}
	-\frac{1}{2(\gamma^2-1)\sqrt{\pi I_n\gamma^2}}\left(\frac{2\gamma^2}{e^{\gamma^2-1}}\right)^{I_n}$.
Setting $\phi=\prod_{n=1}^d{\chi_n}$ and plugging inequality (\ref{Proof for R-L-DEIM HOID eq1}) into (\ref{Proof for R-L-DEIM HOID eq2}), we obtain the desired result.
\end{proof}
	\section{Randomization for hybrid decomposition}
	\hskip 2em
	This subsection develops the randomized algorithms for computing the hybrid CUR-type decomposition of the form (\ref{hybrid decomposition}).
	
	\hskip 2em
	The essence of the hybrid decomposition is that we retain the fibers of the original tensor in only one mode, or in more, but not all modes.
	Specifically, as in (\ref{hybrid decomposition}), the fibers from the first $t$ modes are preserved in matrices $\{C_i\}_{i=1}^t$, which are the representative of the mode-$i$ unfolding matrices $X_{(i)}$, and matrices $\{U_j\}_{j=t+1}^d$ which contain first $r_j$ left singular vectors of $X_{(j)}$ are chosen to minimize the approximation error.
	We summarize the hybrid approach in Algorithm \ref{Al-hybrid} for the case that only the first mode of the original fibers is preserved.
	Alternatively, we may opt to extract fibers from multiple modes, while noting that the reduction in the number of preserved original fibers correlates with an increase in the resulting error.
	
		\begin{algorithm}[htb]
		\caption{Hybrid algorithm \cite{begovic2022hybridBIT}}
		\label{Al-hybrid}
		\hspace*{0.02in} {\bf Require:}
		$\mathcal{X}\in\mathbb{R}^{I_1\times I_2\times\cdots \times I_d}$ and desired multilinear rank $(r_1,r_2,\ldots,r_d)$.
		\begin{algorithmic}[1]
			\For{$i=2,3,\ldots,d$}
			\State Compute matrix $U_i$ containing the leading $r_i$ left singular vectors of $X_{(i)}$.
			\EndFor
			\State Perform the PQR decomposition $X_{(1)}P=QR$.
			\State 	Compute factor matrix
			$C=X_{(1)}P(:,1:r_1)\in \mathbb{R}^{n_1\times r_1}$.
			\State Compute core tensor
			$\mathcal{G}\in\mathbb{R}^{r_1\times\cdots \times r_d}$ as
			\begin{equation*}
				\mathcal{G}=\mathcal{X}\times_1C^{\dagger}\times_2 U_2^{\mathrm{T}}\cdots \times_d U_d^{\mathrm{T}}.
			\end{equation*}
		\State {\bf return} tensor hybrid decomposition $\mathcal{X}\approx\mathcal{G}\times_1 C\times_2 U_2\cdots\times_d U_d$
		\end{algorithmic}
	\end{algorithm}
	
	\hskip 2em
	In Algorithm \ref{Al-hybrid}, the factor matrix $C$ is derived by performing PQR to the mode-$1$ unfolding, while other sampling techniques, such as the RRQR, DEIM and L-DEIM can also be employed.
	Nevertheless, the precise computation of the PQR or the singular matrices of $X_{(n)}$ can be excessively costly, 
	thereby posing a challenge for large-scale applications. 
	Here we adopt random sampling techniques to tackle this difficulty.
	Given matrix $X\in\mathbb{R}^{m\times n}$ with $n\ge m$, the randomized algorithm in \cite{martinsson2011ACHArandomized} yields an approximate interpolatory decomposition with the error bound
	\begin{equation}\label{error of the A=BP}
		\|CU-X\|_2\le(
		\sqrt{2lm\beta^2\gamma^2+1}(\sqrt{4k(n-k)+1}+1)
		+\beta\gamma\sqrt{2lm}\sqrt{4k(n-k)+1}
		)\sigma_{k+1},
	\end{equation}
	with probability not less than $\chi$, where $\chi$, $\beta$ and $\gamma$ are defined as in (\ref{Err bound for the randSVD}) and (\ref{Pro for the RandSVD}).
	Computational complexity analysis and numerical examples illustrate this method can accelerate the approximation of matrices significantly.
	We present our approach in Algorithm \ref{Al-R-hybrid-PQR}, where we exploit the randomization techniques to accelerate the process of the SVD and the interpolatory decomposition to each mode unfolding, providing an approximation $\widehat{\mathcal{X}}$ in a hybrid CUR-type Tucker format for a given tensor $\mathcal{X}$ such that
	\begin{equation*}
		\widehat{\mathcal{X}}
		=\mathcal{G}\times_1
		C_1\cdots \times_t C_t
		\times_{t+1}U_{t+1}\cdots\times_d U_d.
	\end{equation*}
	The following theorem quantifies the error of the approximate hybrid CUR-type Tucker decomposition produced by Algorithm \ref{Al-R-hybrid-PQR}.
	\begin{algorithm}[htb]
	\caption{Randomized hybrid algorithm based the PQR}
	\label{Al-R-hybrid-PQR}
	\hspace*{0.02in} {\bf Require:}
	$\mathcal{X}\in\mathbb{R}^{I_1\times I_2\times\cdots \times I_d}$, multilinear rank $(r_1,r_2,\ldots,r_d)$ and oversampling parameter $p$.
	\begin{algorithmic}[1]
		\For{$i=1,2,\ldots,t$}
		\State{Draw random Gaussian matrix $\Omega\in\mathbb{R}^{(r_i+p)\times I_i}$.}
		\State{Compute $Y=\Omega X_{(i)}$.}
		\State Apply the pivoted Gram-Schmidt process to the columns of $Y$, 
		$	YP=QR$,
	where $P\in \mathbb{R}^{(\prod_{k\neq i}n_k) \times (\prod_{k\neq i}n_k)}$
		$P$ is a permutation matrix,
		$Q\in \mathbb{R}^{(r_i+p) \times r_i}$ is orthonormal, and
		$R\in \mathbb{R}^{r_i \times \prod_{k\neq i}I_k}$ is
upper triangular.
		\State{Form $C_i=X_{(i)}P(:,1:r_i)$.}
		\EndFor
		\For{$j=t+1, t+2, \ldots,d$}
		\State{Draw random Gaussian matrix $\Omega\in\mathbb{R}^{(r_j+p)\times I_j}$.}
		\State{Compute $M=\Omega X_{(j)}$.}
		\State Compute the SVD of $M$,
		$	M=Z \Sigma V^{\mathrm{T}}$,
	where $Z\in\mathbb{R}^{\prod_{k\neq j}I_k \times (r_j+p)}$ and
		$V\in\mathbb{R}^{(r_j+p) \times (r_j+p)}$ are orthonormal,
		and $\Sigma\in\mathbb{R}^{(r_j+p)\times (r_j+p)}$
		is diagonal.
		\State{Form $Q=Z(:,1:r_j)$.}
		\State{Compute $T=X_{(j)}Q$.}
		\State{Compute matrix $U_j$ containing $r_j$ left singular vectors of $X_{(j)}$.}
		\EndFor
		\State{Compute the core tensor
			$\mathcal{G}\in\mathbb{R}^{r_1\times r_2\times \cdots\times r_d}$ as
			$	\mathcal{G}=\mathcal{X}
				\times_1C_1^{\dagger}\cdots \times_t C_t^{\dagger}
				\times_{t+1}U_{t+1}^{\mathrm{T}}\cdots\times_{d} U_d^{\mathrm{T}}$.
		}
	\end{algorithmic}
\end{algorithm}
	
\begin{theorem}\label{Err of the R-hybrid PQR}
	Let $\mathcal{X}\in\mathbb{R}^{I_1\times I_2\times \cdots \times I_d}$ with $I_n\le\prod_{k\neq n}I_k$ for $1\le n\le d$.
	Suppose that $p$ is an oversampling parameter, $\beta$ and $\gamma$ are positive numbers such that $\gamma>1$, and
	\begin{equation}\label{Pro of R-PQR-Hybrid}
		\chi_n=1-\frac{1}{\sqrt{2\pi(p+1)}}\left(\frac{e}{(p+1)\beta}\right)^{p+1}
		-\frac{1}{2(\gamma^2-1)\sqrt{\pi I_n \gamma^2}}\left(\frac{2\gamma^2}{e^{\gamma^2-1}}\right)^{I_n}.
	\end{equation}\label{ineq of the R-hybrid PQR}
	Then Algorithm \ref{Al-R-hybrid-PQR} produces a hybrid decomposition with the following error bound,
	\begin{equation}
		\begin{aligned}
			\|\mathcal{X}-\widehat{\mathcal{X}}\|_F^2\le
			&\sum\limits_{i=1}^t I_i
			\left[\sqrt{2(r_i+p)I_i\beta^2\gamma^2+1}
			\left(\sqrt{4r_i(\prod_{k\neq i}I_k-r_i)+1}+1\right)
			\right.\\
			&\left.
			+
			\beta\gamma\sqrt{2(r_i+p)I_i}\sqrt{4r_i(\prod_{k\neq i}I_k-r_i)+1}
			\right]^2\sigma_{r_i+1}^2
			\\
			&+ \sum\limits_{j=t+1}^{d} I_j
			\left(
			2\sqrt{2(r_j+p)I_j\beta^2\gamma^2+1}+2\sqrt{2(r_j+p)I_j}\beta\gamma
			\right)^2\sigma_{r_j+1}^2
		\end{aligned}
	\end{equation}
	with probability not less than $\phi=\prod_{n=1}^d{\chi_n}$, where $\sigma_{r_i+1}$ is the $(r_i+1)$th largest singular value of $X_{(i)}$.
\end{theorem}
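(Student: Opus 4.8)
The plan is to rewrite $\hat{\mathcal{A}}$ as the tensor $\mathcal{A}$ acted on in each mode by an orthogonal projector, reduce the tensor error to a sum of $d$ matrix approximation errors by a mode-by-mode telescoping, and then close each matrix error with the randomized bounds (\ref{error of the A=BP}) and (\ref{error of the SVD}). First I would use $(\mathcal{X}\times_k A)\times_k B=\mathcal{X}\times_k(BA)$ to compose the $k$-mode products defining the core $\mathcal{G}$ (step 15 of Algorithm 2) and $\hat{\mathcal{A}}$, obtaining $\hat{\mathcal{A}}=\mathcal{A}\times_1\Pi_1\times_2\cdots\times_d\Pi_d$, where $\Pi_i=C_iC_i^{\dagger}$ for $1\le i\le t$ and $\Pi_j=U_jU_j^{\mathrm{T}}$ for $t+1\le j\le d$. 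The point I would stress is that every $\Pi_k$ is an \emph{orthogonal} projector: $U_jU_j^{\mathrm{T}}$ because $U_j$ has orthonormal columns, and $C_iC_i^{\dagger}$ because it is symmetric and idempotent --- it is the orthogonal projector onto $\operatorname{range}(C_i)$, whether or not $C_i$ has full column rank. Hence each $\Pi_k$ has spectral norm at most one and does not increase the Frobenius norm when applied in any mode.

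Next I would write the telescoping identity $\mathcal{A}-\hat{\mathcal{A}}=\sum_{k=1}^{d}\mathcal{A}\times_1\Pi_1\cdots\times_{k-1}\Pi_{k-1}\times_k(I-\Pi_k)$ and show that its $d$ summands are pairwise orthogonal with respect to the Frobenius inner product: for $k<k'$, matricizing the $k$th and the $k'$th summands along mode $k$ puts the factor $I-\Pi_k$ on the left of the former and $\Pi_k$ on the left of the latter, and $(I-\Pi_k)\Pi_k=0$ (using symmetry of $\Pi_k$) makes the trace inner product vanish. This gives the Pythagorean identity $\|\mathcal{A}-\hat{\mathcal{A}}\|_F^2=\sum_{k=1}^{d}\|\mathcal{A}\times_1\Pi_1\cdots\times_{k-1}\Pi_{k-1}\times_k(I-\Pi_k)\|_F^2$; discarding the projectors $\Pi_1,\dots,\Pi_{k-1}$ (each of spectral norm at most one) from the $k$th summand then gives $\|\mathcal{A}-\hat{\mathcal{A}}\|_F^2\le\sum_{k=1}^{d}\|\mathcal{A}\times_k(I-\Pi_k)\|_F^2=\sum_{k=1}^{d}\|X_{(k)}-\Pi_kX_{(k)}\|_F^2$, the last equality because the mode-$k$ unfolding of $\mathcal{A}\times_k(I-\Pi_k)$ is $(I-\Pi_k)X_{(k)}$.

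It then remains to bound each matrix term. For an ID mode $i$, the columns forming $C_i$ in Algorithm 2 are precisely the skeleton produced by the randomized ID applied to $A_{(i)}$, so by (\ref{error of the A=BP}) there is a $B_i$ with $\|C_iB_i-X_{(i)}\|_2$ bounded by the bracketed coefficient of that inequality times $\sigma_{r_i+1}$ under the correspondence $k=r_i$, $l=r_i+p$, $m=n_i$, $n=\prod_{k\neq i}n_k$; since $\Pi_iX_{(i)}=C_iC_i^{\dagger}X_{(i)}$ is the orthogonal projection of the columns of $X_{(i)}$ onto $\operatorname{range}(C_i)$, it approximates $X_{(i)}$ at least as well in the spectral norm as $C_iB_i$ does, so $\|X_{(i)}-\Pi_iX_{(i)}\|_2$ obeys the same bound. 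Likewise (\ref{error of the SVD}) controls $\|X_{(j)}-U_jU_j^{\mathrm{T}}X_{(j)}\|_2$ for an SVD mode $j$, now with $m=n_j$. Passing from the spectral to the Frobenius norm via $\|M\|_F\le\sqrt{\operatorname{rank}(M)}\,\|M\|_2\le\sqrt{n_k}\,\|M\|_2$ --- the rank estimate using the hypothesis $n_k\le\prod_{i\neq k}n_i$ --- and squaring reproduces exactly the two sums in the statement. For the probability, each of the $d$ sketches succeeds with the probability $\chi$ of (\ref{error of the A=BP})--(\ref{error of the SVD}) evaluated at its own row count $m\in\{n_1,\dots,n_d\}$, which is at least $\phi$ (the value of that expression at the minimal row count $I^{'}=\min_k n_k$); a union bound over the $d$ steps then yields the asserted overall probability. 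I expect the main obstacle to be the Frobenius-orthogonality bookkeeping in the telescoping step --- checking that the cross terms genuinely cancel and that the order in which the modes are peeled off is immaterial --- together with the careful matching of the parameters of (\ref{error of the A=BP})--(\ref{error of the SVD}) to the unfolding dimensions.
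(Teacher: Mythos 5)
Your proposal is correct and follows essentially the same route as the paper: reduce $\|\mathcal{A}-\hat{\mathcal{A}}\|_F^2$ to the per-mode projection errors via the quasi-Pythagorean inequality (which the paper simply cites as Lemma 2.1 of \cite{SIAM2016Saibaba} rather than reproving by your telescoping argument), then bound each unfolded error $\|(I-\Pi_k)A_{(k)}\|_F^2$ by $n_k$ times the squared spectral-norm bounds (\ref{error of the A=BP}) and (\ref{error of the SVD}), exactly as you do. The one caveat is the probability bookkeeping: a union bound over the $d$ independent sketches actually yields $1-d(1-\phi)$ rather than $\phi$, but the paper's own proof is silent on this point, so your treatment is no less rigorous than the original.
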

\begin{proof}
	Using the property of the mode-$n$ product, we have
	\begin{equation*}
		\|\mathcal{X}-\hat{\mathcal{X}}\|_F^2
		=\|\mathcal{X}-
		\mathcal{X}\times_1(C_1C_1^{\dagger})
		\times_2\cdots\times_t(C_tC_t^{\dagger})
		\times_{t+1}(U_{t+1}U_{t+1}^{\mathrm{T}})
		\times\cdots\times_d(U_dU_d^{\mathrm{T}})\|_F^2.
	\end{equation*}
	Notice that $C_iC_i^{\dagger}$ and $U_jU_j^{\mathrm{T}}$ are orthogonal projections.
	Recalling the result in \cite[Lemma 2.1]{sorensen2016deimSIAM}, for orthogonal projections $\{\Pi_i\}_{i=1}^n$, we have
	\begin{equation*}
		\| \mathcal{X}-\mathcal{X}\times_1\Pi_1 \times_2\Pi_2  \cdots \times_d\Pi_d  \|_F^2
		\le \sum_{i=1}^d \|\mathcal{X}-\mathcal{X}\times_i\Pi_i\|_F^2,
	\end{equation*}
	then it follows that
	\begin{equation}\label{the relation for Frobeniun norm of the error}
		\begin{aligned}
			\|\mathcal{X}-\widehat{\mathcal{X}}\|_F^2
			\le&\sum_{i=1}^{t}\|\mathcal{X}-\mathcal{X}\times_i(C_iC_i^{\dagger})\|_F^2
			+\sum_{j=t+1}^{d}\|\mathcal{X}-\mathcal{X}\times_j(U_jU_j^{\mathrm{T}})\|_F^2
			\\
			=&\sum_{i=1}^{t}\|(I-C_iC_i^{\dagger})X_{(i)}\|_F^2
			+\sum_{j=t+1}^{d}\|(I-U_jU_j^{\mathrm{T}})X_{(j)}\|_F^2.
		\end{aligned}
	\end{equation}
	As described in Algorithm \ref{Al-R-hybrid-PQR}, matrices $X_{(i)}, 1\le i\le t$ and $X_{(j)}, t+1\le j\le d$ own the interpolatory factorization $X_{(i)}=C_iB_i+E_i$ and the SVD such that $X_{(j)}=U_j\Sigma_jV_j^\mathrm{T}+H_j$.
	Therefore,
	\begin{equation*}
		\|(I-C_iC_i^{\dagger})X_{(i)}\|_F^2
		=\|(I-C_iC_i^{\dagger})(C_iB_i+E_i)\|_F^2
		=\|(I-C_iC_i^{\dagger})E_i\|_F^2
		\le
		I_i\|E_i\|^2_2,
	\end{equation*}
	\begin{equation*}
		\begin{aligned}
			\|(I-U_jU_j^{\mathrm{T}})X_{(j)}\|_F^2
			&=\|(I-U_jU_j^{\mathrm{T}})(U_j\Sigma_jV_j^{\mathrm{T}}+H_j)\|_F^2
			=\|(I-U_jU_j^{\mathrm{T}})H_j\|_F^2
			\le
			I_j\|H_j\|_2^2.
		\end{aligned}
	\end{equation*}
	Plugging these two inequalities into (\ref{the relation for Frobeniun norm of the error}), and using the results in (\ref{error of the A=BP}) and (\ref{Err bound for the randSVD}), we obtain the desired result.	
\end{proof}

	\hskip 2em
	Here we also apply the L-DEIM approach for designing a new randomized algorithm to compute the hybrid decomposition. 
	Once again, the randomized SVD algorithm is utilized to speed up the calculations.
	This method is presented in Algorithm \ref{Al-R-hybrid-LDEIM}.
	Notice that in Algorithm \ref{Al-R-hybrid-LDEIM}, if we set the parameters $\widehat{r}_i=r_i$, for $i=1,\ldots,t$, then this method degenerates to the DEIM induced HOID algorithm.
	We derive the upper bound for the expected error in the following theorem.

	\begin{theorem} \label{Err of the R-hybrid L-DEIM}
		Let $\widehat{\mathcal{X}}$ be an approximation of $\mathcal{X}\in\mathbb{R}^{I_1\times  I_2\times \cdots \times I_d}$ computed by Algorithm \ref{Al-R-hybrid-LDEIM}. Then the approximation error $\mathcal{E}$ satisfies
		\begin{equation}\label{ineq of the R-hybrid L-DEIM}
			\begin{aligned}
				\left\|\mathcal{E}\right\|_F^2
				&\le
				\sum\limits_{j=t+1}^d I_j
				\left(
				2 \sqrt{2(r_j+p) I_j \beta^{2} \gamma^{2}+1}+2 \sqrt{2(r_j+p)I_j} \beta \gamma\right)^2
				\sigma_{r_j+1}^2
				\\
				&+
				\sum\limits_{i=1}^t I_i
				\left(\prod_{k\neq i}I_k\right)
				\left(\frac{\widehat{r}_i \cdot 4^{\widehat{r}_i}}{3}\right)
				\left(
				2\sqrt{2\left(\widehat{r}_i+p\right)I_i\beta^2\gamma^2+1}
				+2\sqrt{2\left(\widehat{r}_i+p\right)I_i}\beta\gamma
				\right)^2
				\sigma_{\widehat{r}_i+1}^2,
			\end{aligned}
		\end{equation}
	with probability not less than $\phi=\prod_{n=1}^d{\chi_n}$. 
	\end{theorem}
	
	\begin{proof}
		Firstly, using analogous operation in the proof of Theorem \ref{Err of the R-hybrid PQR} yields the following inequality:
		\begin{equation}\label{Proof for R-hybrid LDEIM eq1}
			\|\mathcal{X}-\widehat{\mathcal{X}}\|_F^2
			\le
			\sum_{i=1}^{t}\|(I-C_iC_i^{\dagger})X_{(i)}\|_F^2
			+\sum_{j=t+1}^{d}\|(I-U_jU_j^{\mathrm{T}})X_{(j)}\|_F^2,
		\end{equation}
	and it still holds that
	\begin{equation}\label{Proof for R-hybrid LDEIM eq2}
		\|(I-U_jU_j^{\mathrm{T}})X_{(j)}\|_F^2
	 	\le
	 	I_j
	 	\left(
	 	2 \sqrt{2(r_j+p) I_j \beta^{2} \gamma^{2}+1}+2 \sqrt{2(r_j+p)I_j} \beta \gamma\right)^2
	 	\sigma_{r_j+1}^2
	\end{equation}
	with probability not less than $\chi_j$.
	
	\hskip 2em
	Secondly, we note that the factor matrices $C_i$ are computed using the same method as in Algorithm \ref{Al-R-hybrid-PQR}, which generates the error satisfying that
	\begin{equation} \label{Proof for R-hybrid LDEIM eq3}
		\begin{aligned}
			\|(I-C_iC_i^{\dagger})X_{(i)}\|^2_F
			\le&
			I_i
			\left(\prod_{k\neq i}I_k\right)
			\left(\frac{\widehat{r}_i 4^{\widehat{r}_i}}{3}\right)
			\left(
			2\sqrt{2\left(\widehat{r}_i+p\right)I_i\beta^2\gamma^2+1}
			\right.\\
			&\left.
			+2\sqrt{2\left(\widehat{r}_i+p\right)I_i}\beta\gamma
			\right)^2
			\sigma_{\widehat{r}_i+1}^2,
		\end{aligned}
	\end{equation}
	with probability not less than $\chi_i$.
	Substituting (\ref{Proof for R-hybrid LDEIM eq2}) and (\ref{Proof for R-hybrid LDEIM eq3}) into the right-hand side of (\ref{Proof for R-hybrid LDEIM eq1}), the desired error bound follows.
	\end{proof}

\begin{algorithm}[t!]
	\caption{Randomized hybrid algorithm based the L-DEIM}
	\label{Al-R-hybrid-LDEIM}
	\hspace*{0.02in} {\bf Require:}
	$\mathcal{X}\in\mathbb{R}^{I_1\times I_2\times \cdots \times I_d}$,
	multilinear rank $(r_1,r_2,\ldots,r_d)$, parameters $(\widehat{r_1},\ldots,\widehat{r_t})$
	and oversampling parameter $p$.
	\begin{algorithmic}[1]
		\For{$i=1,2,\ldots,t$}
		\State Draw random Gaussian matrix
		$\Omega\in\mathbb{R}^{(\widehat{r}_i+p)\times I_i}$.
		\State Compute $Y=\Omega X_{(i)}$.
		\State Compute the SVD of $Y^{\mathrm{T}}$,
		$Y^{\mathrm{T}}=Z M K^{\mathrm{T}}$,
		where $Z \in\mathbb{R}^{\prod_{k\neq i}I_k \times (\widehat{r}_i+p)}$ and
		$K\in\mathbb{R}^{(\widehat{r}_i+p) \times (\widehat{r}_i+p)}$ are orthonormal,
		and $M$ is diagonal.
		\State Form $Q=Z(:,1:\widehat{r_i})$.
		\State Compute $T=X_{(i)}Q$.
		\State Compute the the SVD of $T$,
		$	T=V \Sigma G^{\mathrm{T}}$,
		where $V \in\mathbb{R}^{\prod_{k\neq i}I_k \times (\widehat{r}_i+p)}$ and
		$G\in\mathbb{R}^{(\widehat{r}_i+p) \times (\widehat{r}_i+p)}$ are orthonormal,
		and $\Sigma$ is diagonal.
		\State Compute $W_i=QG$.
		\For{$l=1,2,\ldots,\widehat{r}_i$}
		\State $\mathbf{s}(l)=\operatorname{argmax}_{1 \leq u \leq \prod_{k\neq i}I_k}\left|(W_i(u, l))\right|$.
		\State $W_i(:, l+1)=W_i(:, l+1)-W_i(:, 1:l) \cdot(W_i(\mathbf{s}, 1:l) \backslash W_i(\mathbf{s},l+1))$.
		\EndFor
		\State Compute $\ell_{u}=\left\|W_u(i,:)\right\| \quad$ for $u=1,2, \ldots, \prod_{k\neq i}I_k$.
		\State Sort $\ell$ in non-increasing order.
		\State Remove entries in $\ell$ corresponding to the indices in $\mathbf{s}$.
		\State $\mathbf{s}^{\prime}=r_i-\widehat{r}_i$ indices corresponding to $r_i-\widehat{r}_i$ largest entries of $\ell$.
		\State $\mathbf{s}=\left[\mathbf{s} ; \mathbf{s}^{\prime}\right]$.
		\State $C_{i}=X_{(i)}(:,\mathbf{s})$
		\EndFor
		\For{$j=t+1, t+2,\ldots,d$}
		\State{ Perform line 2-7 to compute matrix $U_j$ containing $r_j$ left singular vectors of $X_{(j)}$.}
		\EndFor
		\State{Compute the core tensor
			$\mathcal{G}\in\mathbb{R}^{r_1\times r_2 \times \cdots\times r_d}$ as
			$	\mathcal{G}=\mathcal{X}
			\times_1C_1^{\dagger}\cdots \times_t C_t^{\dagger}
			\times_{t+1}U_{t+1}^{\mathrm{T}}\cdots\times_{d} U_d^{\mathrm{T}}$.
		}
	\end{algorithmic}
\end{algorithm}

	\hskip 2em
	We now comment on the practical aspects of the algorithms.
	One can observe that the randomized hybrid algorithms described above can be split naturally into two computational steps.
	The first step is to construct matrices $C_i$ for the first $t$ modes, where the arithmetic cost comprises the cost of sampling and the cost of computing the SVD or PQR and the latter is actually the most time-consuming operation.
	The computational complexity of computing SVD in lines 4 and 7 of Algorithm \ref{Al-R-hybrid-LDEIM} is
	$\mathcal{O}\left(\prod_{i=1}^t\left[(\widehat{r}_i+p)^2\prod_{k\neq i}I_k+I_i{\widehat{r}_i}^2\right]\right)$, which is much lower to the cost of computing the QR corresponding to line 4 of Algorithm \ref{Al-R-hybrid-PQR}.
	This computational advantage is mainly attributed to the superiority of the L-DEIM procedure, which is crucial, especially for the situation with a large $t$.
	In the second step of Algorithms \ref{Al-R-hybrid-PQR}, and \ref{Al-R-hybrid-LDEIM}, we exploit the random sampling techniques to obtain $U_j$, containing $r_j$ right singular vectors which cost
	$\mathcal{O}\left(
	\prod_{j=t+1}^d\left[
	(r_j+p)I_j\prod_{k\neq j}I_k+(r_j+p)^2\prod_{k\neq j}I_k+I_j{r_j}^2+I_jr_j\prod_{k\neq j}I_k
	\right]
	\right)$.
	Numerical experiments in the next section will show that the two algorithms lead to dramatic accelerations in practice, and have the accuracy comparable with the deterministic algorithm.

	\section{Numerical examples}
	\hskip 2em
	In this section, we check the accuracy and the computational cost of the proposed algorithms on various synthetic and real-world data sets.
	All computations are carried out in MATLAB R2020a on a computer with an AMD Ryzen 5 processor and 16 GB RAM.
	The tensor package in MATLAB, namely Tensor Toolbox \cite{bader2008SIAMefficient} is used.
	For the sake of clarity and consistency, we introduce the following acronyms to facilitate comparisons between different algorithms.
	The algorithms under consideration operate on the input tensors to produce a Tucker approximation with a multilinear rank $(r_1,r_2,\ldots,r_d)$:
	
	\hskip 2em 1. HOID$-$
	implements the HOID algorithm with column subset selection implemented using either the DEIM algorithm (Algorithm \ref{Al-DEIM}) labeled ``HOID-DEIM'', or the L-DEIM algorithm (Algorithm \ref{Al-LDEIM}) labeled ``HOID-LDEI'' as summarized in Algorithm \ref{Al-LDEIM-HOID}.
	
	\hskip 2em 2. R-HOID $-$
	applies the randomized HOID algorithm with column subset selection implemented using either the DEIM algorithm (Algorithm \ref{Al-DEIM}) labeled ``R-HOID-DEIM'', summarized in Algorithm \ref{Al-R-DEIM-HOID}, or the L-DEIM algorithm (Algorithm \ref{Al-LDEIM}) labeled ``R-HOID-LDEIM'' as summarized in Algorithm \ref{Al-R-LDEIM-HOID}.
	
	\hskip 2em 3. Hybrid $-$
	 implements Algorithm \ref{Al-hybrid} to produce the hybrid decomposition.
	
	\hskip 2em 4. R-Hybrid $-$
	implements the randomized hybrid algorithm based on the PQR (Algorithm \ref{Al-R-hybrid-PQR}) labeled ``R-hybrid-PQR'', and the randomized algorithm based on the DEIM algorithm labeled ``R-hybrid-DEIM'', and the L-DEIM algorithm  labeled ``R-hybrid-LDEIM'' (Algorithm \ref{Al-R-hybrid-LDEIM}) to produce the hybrid decomposition.
	
	$\mathbf{Example}$ $\mathbf{6.1}$
	  We evaluate the efficacy of the proposed algorithms on the function related tensor below from \cite{begovic2022hybridBIT}
	\begin{equation*}
		\mathcal{X}(i_1, i_2,\ldots,i_d)=\frac{1}{i_1+2\cdot i_2+\cdots+d \cdot i_d},
		\quad 1\le i_j \le I_j \ for\ j=1,2,\ldots,d.
	\end{equation*}
	As described in \cite{begovic2022hybridBIT}, the utilization of the aforementioned tensor yields an advantage in that the singular values of every mode unfolding of the tensor $\mathcal{X}$ exhibit a rapid decay.
	This characteristic indicates that tensor $\mathcal{X}$ is highly amenable to the randomized algorithms proposed in this paper.
	The HOID and Hybrid methods are relevant here because the entries of this tensor are non-negative and we would like to preserve this structure in the column matrices $\{C_i\}$.
	
	\hskip 2em
	 We conduct two sets of experiments on tensor $\mathcal{X}$.
	 Our first experiment compares the accuracy of the HOID algorithms with their randomized counterparts R-HOID.
	 Our inputs consisted of tensor $\mathcal{X}$ with $d = 3$ and $I_j=100$ for $j=1,2,\dots,d$.
	 For each algorithm, we use the target multirank $(r, r, r)$, where $r$ varies from
	 $1$ to $10$, and the parameter $\widehat{r}_n$ contained in the L-DEIM procedure is $r-1$ for $r\ge 2$.
	 The same oversampling parameter $p = 5$ is used in every mode.
	The left section of Figure \ref{fig-ex6.1-err} illustrates the relative error of all four algorithms, demonstrating that their approximation errors are remarkably similar.
	Notably, the randomized algorithms exhibit impressive accuracy as well.
	In the subsequent experiment, we compared the accuracy of the Hybrid and R-Hybrid algorithms using a set of inputs with $I_j=50$ for $j=1,2,\dots,d$,
	and the results are depicted in the right section of Figure \ref{fig-ex6.1-err}.
	Once again, we observed that all four algorithms performed similarly, and the error computed by the R-Hybrid-LDEIM algorithm was only marginally higher than that of the other algorithms.

	 \begin{figure}[htbp]
	 	\centering
	 	\subfigure{\includegraphics[width=0.35\textwidth,height=0.35\textwidth]{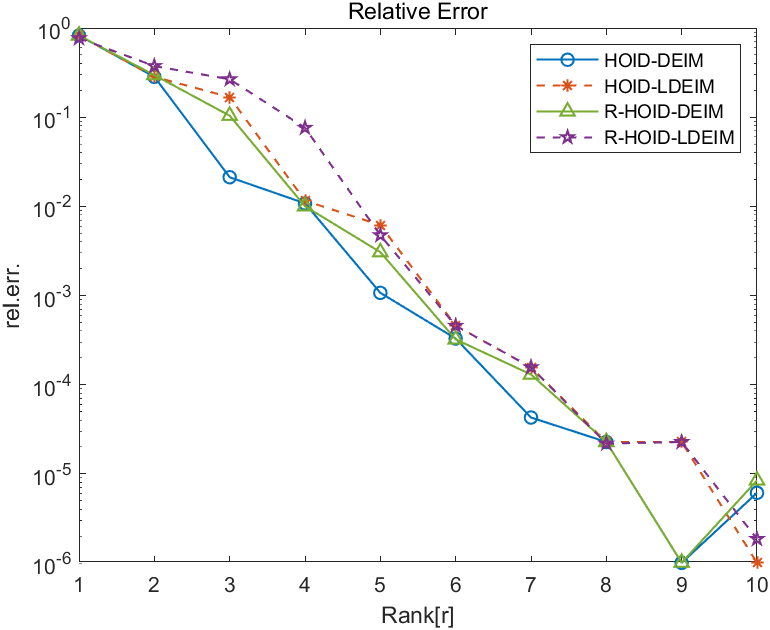}
	 	}
	 	\subfigure{\includegraphics[width=0.35\textwidth,height=0.35\textwidth]{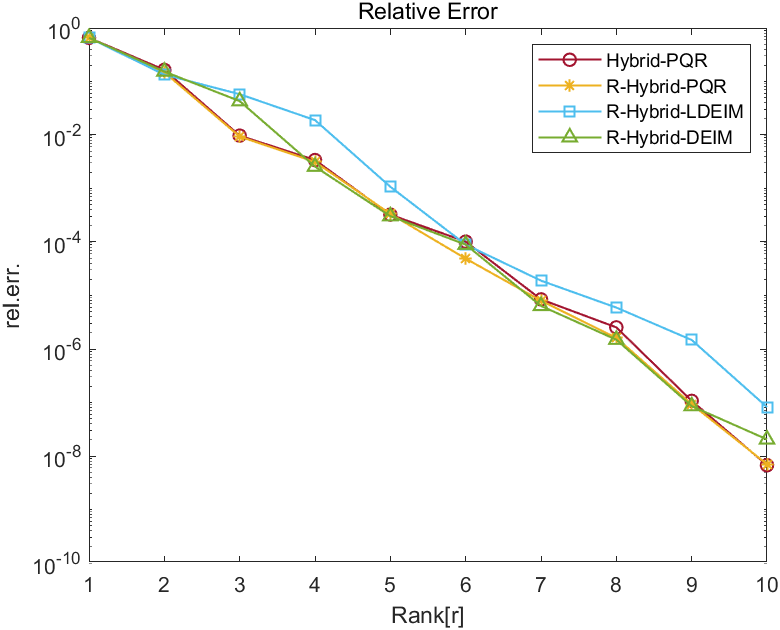}
	 	}
	 	\caption{Relative error in the computation of a rank$-(r, r, r)$ approximation to the tensor $\mathcal{X}$ with $d=3$ and oversampling parameter $p = 5$.
	 		Left: computed by the HOID and R-HOID methods with $I_j=100$ for $j=1,2,\dots,d$.
	 		Right: computed by the Hybrid and R-Hybrid methods with $I_j=50$ for $j=1,2,\dots,d$.}
 		\label{fig-ex6.1-err}
	 \end{figure}
 	\hskip 2em
 	Our analysis reveals that the randomized variations of our proposed algorithms exhibit significantly lower computational costs in comparison to their deterministic counterparts.
	To illustrate this, we conducted experiments on tensor $\mathcal{X}$, gradually increasing the size of each dimension $N$ and the target rank $r$, and record the CPU time in seconds (denoted as CPU) and the approximation quality (measured by the relative error, Err) of the HOID, R-HOID, Hybrid, and R-Hybrid algorithms.
	Our investigation begin by comparing the accuracy and CPU time of the HOID algorithms against their randomized equivalents, R-HOID,
	while holding the oversampling parameter at a fixed value of $p = 5$ for the inputs.
 	According to the conclusions summarized in \cite{gidisu2022hybridArxiv}, the L-DEIM procedure may be comparable to the original DEIM method when the target rank $r$ is at most twice the available $\widehat{r}$ singular vectors.
 	Therefore, here we set the parameter $\widehat{r}_n,n=1,2,\ldots,d$ contained in the L-DEIM to be $\widehat{r}_n=r_n/2$.
 	We record the results in Table \ref{table-6.1-errCPU-HOID}.
 	It is clear from the running time that the algorithms R-HOID-DEIM and R-HOID-LDEIM have a huge advantage in computing speed over the non-random HOID method. 
 	 We also observe that the L-DEIM induced algorithms HOID-LDEIM and R-HOID-LDEIM beat the HOID-DEIM and R-HOID-DEIM algorithms both in terms of accuracy and computational cost.
 	
 	\hskip 2em
 	Then we perform the same set of experiments to show the advantage of the R-Hybrid over the Hybrid method, and we display the relative errors and CPU in Table \ref{table-6.1-errCPU-Hybrid}.
 	Table \ref{table-6.1-errCPU-Hybrid} illustrates that the randomized algorithms lead to a dramatic speed-up over the classical nonrandom algorithms, while the R-Hybrid-LDEIM algorithm achieves the smallest running time among the four sets of experiments.
 	We can also see that the approximation errors of all the four algorithms are very close.
 	
 	\begin{table}[H]
 		\caption{Comparison of the deterministic algorithms (HOID-DEIM and HOID-LDEIM) and the randomized algorithms (R-HOID-DEIM and R-HOID-LDEIM) in the CPU and relative error as the dimension $N$ and the target rank $r$ increase.}
 		\label{table-6.1-errCPU-HOID}
 		\centering
 		\begin{tabular}{c c c c c c c}
 			\begin{tabular}{|c|c|c|c|c|c|c|}
 				\hline
 				\multicolumn{2}{|c|}{$(N,r)$ }
 				& $(200,30)$ & $(300,30)$ &$(400,40)$ & $(500,50)$ & $(600,40)$ \\
 				\hline
 				\multirow{2}{*}{ HOID-DEIM } & Err
 				& $1.5436\mathrm{e}\mbox{-}04$ & $7.7292\mathrm{e}\mbox{-}05$ & $9.5476\mathrm{e}\mbox{-}05$ & $5.9806\mathrm{e}\mbox{-}05$ & $6.7395\mathrm{e}\mbox{-}05$ \\
 				\cline { 2 - 7 } & CPU
 				& $11.018$ & $29.077$ & $89.509$ & $189.01$ & $234.53$ \\
 				\hline \multirow{2}{*}{ HOID-LDEIM } & Err
 				& $9.8135\mathrm{e}\mbox{-}07$ & $1.8748\mathrm{e}\mbox{-}08$ & $3.1492\mathrm{e}\mbox{-}05$ & $1.8460\mathrm{e}\mbox{-}05$ & $1.5446\mathrm{e}\mbox{-}05$ \\
 				\cline { 2 - 7 } & CPU
 				& $4.5589$ & $13.168$ & $37.091$ & $83.891$ & $105.76$ \\
 				\hline \multirow{2}{*}{ R-HOID-DEIM } & Err
 				& $2.7009\mathrm{e}\mbox{-}05$ & $1.2524\mathrm{e}\mbox{-}05$ & $2.7430\mathrm{e}\mbox{-}05$ & $9.2903\mathrm{e}\mbox{-}05$ & $5.0762\mathrm{e}\mbox{-}05$ \\
 				\cline { 2 - 7 } & CPU
 				& $0.69462$ & $1.7397$ & $4.9565$ & $10.936$ & $12.769$
 				\\
 				\hline \multirow{2}{*}{ R-HOID-LDEIM } & Err
 				& $1.2343\mathrm{e}\mbox{-}06$ & $6.0400\mathrm{e}\mbox{-}08$ & $5.2151\mathrm{e}\mbox{-}06$ & $3.6238\mathrm{e}\mbox{-}05$ & $3.4919\mathrm{e}\mbox{-}05$ \\
 				\cline { 2 - 7 } & CPU
 				& $0.39307$ & $1.1647$ & $2.6360$ & $5.8115$ & $7.2012$ \\
 				\hline
 			\end{tabular}
 		\end{tabular}	
 	\end{table}
 	\begin{table}[H]
 	\caption{Comparison of the Hybrid and the randomized algorithms (R-Hybrid-PQR, R-Hybrid-DEIM and R-Hybrid-LDEIM) in the CPU and relative error as the dimension $N$ and the target rank $r$ increase.}
 	\label{table-6.1-errCPU-Hybrid}
 	\centering
 	\begin{tabular}{c c c c c c }
 		\begin{tabular}{|c|c|c|c|c|c|}
 			\hline
 			\multicolumn{2}{|c|}{$(N,r)$ } & $(100,20)$ &$(125,30)$&$(150,50)$ & $(200,30)$ \\
 			\hline
 			\multirow{2}{*}{ Hybrid } & Err & $2.9535\mathrm{e}\mbox{-}07$ & $1.9356\mathrm{e}\mbox{-}15$ & $1.7241\mathrm{e}\mbox{-}15$ & $2.1394\mathrm{e}\mbox{-}15$  \\
 			\cline { 2 - 6 } & CPU & $42.122$ & $135.16.3914$ & $69.017$ & $200.31$\\
 			\hline \multirow{2}{*}{ R-Hybrid-PQR } & Err & $2.6169\mathrm{e}\mbox{-}07$ & $1.5708\mathrm{e}\mbox{-}07$ & $1.7312\mathrm{e}\mbox{-}15$ & $1.7677\mathrm{e}\mbox{-}15$ \\
 			\cline { 2 - 6 } & CPU & $0.071084$ & $0.17292$ & $0.97392$ & $0.54113$ \\
 			\hline \multirow{2}{*}{ R-Hybrid-DEIM } & Err & $1.9946\mathrm{e}\mbox{-}07$ & $1.9049\mathrm{e}\mbox{-}07$ & $1.8864\mathrm{e}\mbox{-}15$ & $2.0811\mathrm{e}\mbox{-}15$\\
 			\cline { 2 - 6 } & CPU & $0.077165$ & $0.19325$ & $0.77934$ & $0.60290$
 			\\
 			\hline \multirow{2}{*}{ R-Hybrid-LDEIM } & Err & $2.1411\mathrm{e}\mbox{-}07$ & $1.6487\mathrm{e}\mbox{-}07$ & $1.9008\mathrm{e}\mbox{-}15$ & $1.5999\mathrm{e}\mbox{-}15$ \\
 			\cline { 2 - 6 } & CPU & $0.067160$ & $0.15138$ & $0.72396$ & $0.46221$ \\
 			\hline
 		\end{tabular}
 	\end{tabular}	
 \end{table}
	
	$\mathbf{Example}$ $\mathbf{6.2}$
	Now we check the accuracy and the computational cost of our algorithms on real-world tensors.
	Our first test problem comes from the classification of handwritten digits images.
	This problem, popularized by Savas and $\text { Eldén }$ in \cite{savas2007PRhandwritten}, involves assigning a label from 0-9 to a new image representing a handwritten digit.
	Here we adopt the classification strategy in \cite{saibaba2016hoidSIAM} which relies on the HOID representation and consists of two main steps:
	a compression phase and a classification phase.
	In the compression phase, various approaches are applied to a training image dataset arranged as a tensor to compute a low multirank decomposition, while the second step is a classification phase.
	Our focus in this study is on the first step of efficiently decomposing a tensor formed using images from the MNIST and USPS databases \cite{OlivettiFaces}.
	These databases contain 60,000 images with $28\times 28$ pixels and 1,100 images with $16\times 16$ pixels, both in 8-bit grayscale.
	The images are unequally distributed over ten classes, but to ensure equal representation across all digits in MNIST, we restrict the number of images in each class to 5,421.
	Consequently, we organize the images from MNIST and USPS into tensors of size $784 \times 5421 \times 10$ and $256\times 110 \times 10$, respectively.
	Here, the first dimension represents the pixels, the second dimension represents the images, and the third dimension represents the digits.
	
	\hskip 2em
	Specifically, we fix the target multirank $(r_1,r_2,r_3)$ to be (62,142,10) for the MINST, (50,100,10) for the USPS, the oversampling parameter $p = 5$ and the parameter $\widehat{r}_n$ contained in the L-DEIM as $r_n/2$ for $n=1,2,3$.
	For the Hybrid methods, the original fibers only in the first mode are preserved.
	We report the running time and the relative error of the HOID, Hybrid algorithms and its randomized algorithms in Tables \ref{table-6.2-errCPU-HOID} and \ref{table-6.2-errCPU-Hybrid}.
	We observe that the classical algorithm runs almost three times as long as the L-DEIM induced randomized algorithms
    (R-HOID-LDEIM and R-Hybrid-LDEIM), which also give  comparable relative errors.
	It indicates that using the random sampling techniques and L-DEIM method leads to a dramatic speed-up over classical techniques.
	
	\begin{table}[H]
		\caption{Relative error and running time of both HOID and the randomized algorithms R-HOID on the tensors generated from the USPS and MINST database.
		We set the target multirank $(r_1,r_2,r_3)$ to be $(62,142,10)$ for MINST and $(50,100,10)$ for the USPS, while the parameter $\widehat{r}_n$ contained in the L-DEIM is $r_n/2$ for $n=1,2,3$ and an oversampling parameter of $p = 5$ as inputs.}
		\label{table-6.2-errCPU-HOID}
		\centering
		\begin{tabular}{|c|c|c|c|c|c|}
			\hline
			\multicolumn{2}{|c|}{Method } &HOID-DEIM &HOID-LDEIM&R-HOID-DEIM &R-HOID-LDEIM \\
			\hline
			\multirow{2}{*}{ USPS } & Err & $0.53785$ & $0.63637$ & $0.56825$ & $0.65947$  \\
			\cline { 2 - 6 } & CPU & $4.3392$ & $2.0040$ & $0.63923$ & $0.34326$\\
			\hline \multirow{2}{*}{ MINST } & Err & $0.54002$ & $0.68095$ & $0.56040$ & $0.70049$ \\
			\cline { 2 - 6 } & CPU & $64.103$ & $32.175$ & $8.8650$ & $4.9925$ \\
			\hline
		\end{tabular}
	\end{table}
	
	\begin{table}[H]
		\caption{Relative error and running time of both Hybrid, R-Hybrid-PQR, R-Hybrid-DEIM and R-Hybrid-LDEIM on the tensors generated from the USPS and MINST database.
		The parameters and target mulitrank are the same as in Table \ref{table-6.2-errCPU-HOID} and the original fibers only in the first mode are preserved. }
		\label{table-6.2-errCPU-Hybrid}
		\centering
		\begin{tabular}{|c|c|c|c|c|c|}
			\hline
			\multicolumn{2}{|c|}{Method } &Hybrid &R-hybrid-PQR&R-hybrid-DEIM &R-hybrid-LDEIM \\
			\hline
			\multirow{2}{*}{ USPS } & Err & $0.47248$ & $0.52994$ & $0.52791$ & $0.54056$  \\
			\cline { 2 - 6 } & CPU & $1.9945$ & $0.42849$ & $0.49049$ & $0.39817$\\
			\hline \multirow{2}{*}{ MINST } & Err & $0.48134$ & $0.52807$ & $0.51707$ & $0.53724$ \\
			\cline { 2 - 6 } & CPU & $127.55$ & $5.9234$ & $6.5194$ & $5.6317$ \\
			\hline
		\end{tabular}
	\end{table}
	
	$\mathbf{Example}$ $\mathbf{6.3}$
	We conducted our final test using a formidable repository of sparse tensors and associated tools, namely, the FROSTT database \cite{smith2017frostt}.
	For this purpose, we selected two large and sparse tensors, whose salient characteristics are presented in Table \ref{ex6.3-Original information}.
	The first tensor, NELL-2 [9], is a dataset that is commonly employed in machine learning systems for establishing relationships among various entities.
	It is a three-dimensional dataset, where the modes correspond to entity, relation, and entity, respectively.
	The second tensor, the NIPS Publications dataset \cite{chechik2007JMLReec}, was collected by Globerson {\it  et al.} and contains papers published in NIPS between 1987 and 2003.
	The tensor has four modes that correspond to paper, author, word, and year, respectively. The entries of the tensor denote the frequency of the occurrence of words in each paper.
{\small	
	 \begin{table}[H]
	 	\centering
	 \caption{
		Summary of sparse tensor examples from the FROSTT database-we include the details for both the full datasets and the condensed datasets used in our experiments.}
	\label{ex6.3-Original information}
	\begin{tabular}{c|c|c|c}
		\hline
		Original tensor & Order & Size & Nonzeros \\
		\hline
		NELL-2 & 3 & $12092 \times 9184 \times 28818$ & $76,879,419$ \\
		NIPS & 4 & $2482 \times 2862 \times 14036 \times 17$ & $3,101,609$ \\
		\hline Condensed tensor & Order & Size & Nonzeros \\
		\hline NELL-2 & 3 & $532 \times 682 \times 606$ & $7069$ \\
		NIPS & 3 & $632 \times 647 \times 684$ & $4561$ \\
		\hline
	\end{tabular}
    \end{table}
}	

	\hskip 2em
	First, we ran both the HOID and R-HOID algorithms on the NELL-2 and NIPS defined in Table \ref{ex6.3-Original information}, which produce a multirank-$(20,20,50)$ and a multirank-$(50,50,50)$ approximation respectively.
	As inputs to our test algorithms, we use the parameter of the L-DEIM $(\widehat{r}_1,\widehat{r}_2,\widehat{r}_3)=(15,15,30)$ for the NELL-2 and use $(\widehat{r}_1,\widehat{r}_2,\widehat{r}_3)=(25,25,25)$ for the NIPS, and the oversampling parameter $p=5$.
	Then we ran the Hybrid and the R-Hybrid algorithm on the NELL 2 and NIPS, where we keep the same parameters and the target mulitrank and we preserve the first two modes of the original tensors.
	The corresponding results are displayed in Tables \ref{ex6.3-ErrCPU-HOID} and \ref{ex6.3-ErrCPU-Hybrid}, where we can see that the randomized algorithms give comparable relative errors at substantially less cost.

		\begin{table}[t!]
		\caption{Relative error and running time of both HOID and R-HOID on the tensors defined in Table \ref{ex6.3-Original information}. We set the target multirank of $(20,20,50)$ and $(\widehat{r}_1,\widehat{r}_2,\widehat{r}_3)=(15,15,30)$ for the NELL-2, while a multirank of $(50,50,50)$ and $(\widehat{r}_1,\widehat{r}_2,\widehat{r}_3)=(25,25,25)$ for the NIPS and an oversampling parameter of $p = 5$ as inputs.}
		\label{ex6.3-ErrCPU-HOID}
		\centering
		\begin{tabular}{|c|c|c|c|c|c|}
			\hline
			\multicolumn{2}{|c|}{Method } &HOID-DEIM &HOID-LDEIM&R-HOID-DEIM &R-HOID-LDEIM \\
			\hline
			\multirow{2}{*}{ NELL-2 } & Err & $0.086911$ & $0.10288$ & $0.090825$ & $0.10776$  \\
			\cline { 2 - 6 } & CPU & $95.989$ & $59.590$ & $7.1387$ & $4.8956$\\
			\hline \multirow{2}{*}{ NIPS } & Err & $0.50393$ & $0.66685$ & $0.56733$ & $0.70049$ \\
			\cline { 2 - 6 } & CPU & $175.83$ & $96.301$ & $10.297$ & $6.1198$ \\
			\hline
		\end{tabular}
	\end{table}
	
	\begin{table}[H]
		\caption{Relative error and running time of Hybrid, R-Hybrid-PQR, R-Hybrid-DEIM and R-Hybrid-LDEIM on the tensors defined in Table \ref{ex6.3-Original information}.
		The parameters and target mulitrank are the same as in Table \ref{ex6.3-ErrCPU-HOID} and we preserve the first two modes of the original tensors. }
		\label{ex6.3-ErrCPU-Hybrid}
		\centering
		\begin{tabular}{|c|c|c|c|c|c|}
			\hline
			\multicolumn{2}{|c|}{Method } &Hybrid &R-hybrid-PQR&R-hybrid-DEIM &R-hybrid-LDEIM \\
			\hline
			\multirow{2}{*}{ NELL-2 } & Err & $0.086615$ & $0.090311$ & $0.088822$ & $0.11510$  \\
			\cline { 2 - 6 } & CPU & $118.59$ & $8.4060$ & $9.7856$ & $8.7567$\\
			\hline \multirow{2}{*}{ NIPS } & Err & $0.43845$ & $0.50652$ & $0.49945$ & $0.65049$ \\
			\cline { 2 - 6 } & CPU & $157.99$ & $15.505$ & $19.600$ & $12.747$ \\
			\hline
		\end{tabular}
	\end{table}
	\section{Conclusion}
	\hskip 2em
	In this paper, by combining the random sampling techniques with the L-DEIM method, we develop new efficient randomized algorithms for computing the approximate CUR-type and hybrid CUR decomposition for tensors in the Tucker format with a given target multilinear rank.
	We also provided the detailed probabilistic analysis for the proposed randomized algorithms.
	Theoretical analysis and numerical examples illustrate that exploiting the randomized
	techniques results in a big improvement in terms of the CPU time while keeping a high degree of accuracy.
	Finally, it is natural to consider applying the L-DEIM for developing randomized algorithms that adaptively find a low multirank representation satisfying a given tolerance,
	which is particularly useful when the target rank is not known in advance, and it will be discussed in our future work.

  \section*{Acknowledgments}
        This work is supported by the National Natural Science Foundation of China (No. 12271108 and 11801534), the Innovation Program of Shanghai Municipal Education Committee and the Fundamental Research Funds for the Central Universities (No. 202264006).

	\bibliographystyle{siam}
	\bibliography{ldeimHOID}	

\begin{thebibliography}{10}

\bibitem{ahmadi2021IEEErandomized}
{\sc S.~Ahmadi-Asl, S.~Abukhovich, M.~G. Asante-Mensah, A.~Cichocki, A.~H.
  Phan, T.~Tanaka, and I.~Oseledets}, {\em Randomized algorithms for
  computation of {Tucker} decomposition and higher order {SVD} {(HOSVD)}}, IEEE
  Access, 9 (2021), pp.~28684--28706.

\bibitem{ahmadi2021IEEEcross}
{\sc S.~Ahmadi-Asl, C.~F. Caiafa, A.~Cichocki, A.~H. Phan, T.~Tanaka,
  I.~Oseledets, and J.~Wang}, {\em Cross tensor approximation methods for
  compression and dimensionality reduction}, IEEE Access, 9 (2021),
  pp.~150809--150838.

\bibitem{OlivettiFaces}
{\sc {AT\&T Laboratories at Cambridge}}, {\em Olivetti database of faces}.
\newblock \url{https://cs.nyu.edu/~roweis/data.html}, 2002.

\bibitem{bader2008SIAMefficient}
{\sc B.~W. Bader and T.~G. Kolda}, {\em Efficient {MATLAB} computations with
  sparse and factored tensors}, SIAM Journal on Scientific Computing, 30
  (2008), pp.~205--231.

\bibitem{barrault2004CRMempirical}
{\sc M.~Barrault, Y.~Maday, N.~C. Nguyen, and A.~T. Patera}, {\em An
  ‘empirical interpolation’method: application to efficient reduced-basis
  discretization of partial differential equations}, Comptes Rendus
  Mathematique, 339 (2004), pp.~667--672.

\bibitem{begovic2022hybridBIT}
{\sc E.~Begovi{\'c}~Kova{\v{c}}}, {\em Hybrid {CUR-type} decomposition of
  tensors in the {Tucker} format}, BIT Numerical Mathematics, 62 (2022),
  pp.~125--138.

\bibitem{cai2021JMLRmode}
{\sc H.~Cai, K.~Hamm, L.~Huang, and D.~Needell}, {\em Mode-wise tensor
  decompositions: Multi-dimensional generalizations of {CUR} decompositions},
  Journal of Machine Learning Research, 22 (2021), pp.~1--36.

\bibitem{carroll1970PSYanalysis}
{\sc J.~D. Carroll and J.-J. Chang}, {\em Analysis of individual differences in
  multidimensional scaling via an {N-way} generalization of
  “{Eckart-Young}” decomposition}, Psychometrika, 35 (1970), pp.~283--319.

\bibitem{chaturantabut2010deimSIAM}
{\sc S.~Chaturantabut and D.~C. Sorensen}, {\em Nonlinear model reduction via
  discrete empirical interpolation}, SIAM Journal on Scientific Computing, 32
  (2010), pp.~2737--2764.

\bibitem{che2022JOTAperturbations}
{\sc M.~Che, J.~Chen, and Y.~Wei}, {\em Perturbations of the {TCUR}
  decomposition for tensor valued data in the {Tucker} format}, Journal of
  Optimization Theory and Applications, 194 (2022), pp.~852--877.

\bibitem{che2019ACMrandomized}
{\sc M.~Che and Y.~Wei}, {\em Randomized algorithms for the approximations of
  {Tucker} and the tensor train decompositions}, Advances in Computational
  Mathematics, 45 (2019), pp.~395--428.

\bibitem{che2020TCCTArandomized}
{\sc M.~{}Che and Y.~Wei}, {\em Randomized algorithms}, in Theory and
  Computation of Complex Tensors and its Applications, Springer, 2020,
  pp.~215--246.

\bibitem{che2020SIAMcomputation}
{\sc M.~Che, Y.~Wei, and H.~Yan}, {\em The computation of low multilinear rank
  approximations of tensors via power scheme and random projection}, SIAM
  Journal on Matrix Analysis and Applications, 41 (2020), pp.~605--636.

\bibitem{che2021JSCefficient}
{\sc M.~{}{}Che, Y.~Wei, and H.~Yan}, {\em An efficient randomized algorithm
  for computing the approximate {Tucker} decomposition}, Journal of Scientific
  Computing, 88 (2021), pp.~1--29.

\bibitem{che2021JCAMrandomized}
{\sc M.~{}{}{}Che, Y.~Wei, and H.~Yan}, {\em Randomized algorithms for the low
  multilinear rank approximations of tensors}, Journal of Computational and
  Applied Mathematics, 390 (2021), p.~113380.

\bibitem{chen2022NFAOtensor}
{\sc J.~Chen, Y.~Wei, and Y.~Xu}, {\em Tensor {CUR} decomposition under
  t-product and its perturbation}, Numerical Functional Analysis and
  Optimization,  (2022), pp.~1--25.

\bibitem{cheng2005SIAMcompression}
{\sc H.~Cheng, Z.~Gimbutas, P.-G. Martinsson, and V.~Rokhlin}, {\em On the
  compression of low rank matrices}, SIAM Journal on Scientific Computing, 26
  (2005), pp.~1389--1404.

\bibitem{cortinovis2020SIAMlow}
{\sc A.~Cortinovis and D.~Kressner}, {\em Low-rank approximation in the
  {Frobenius} norm by column and row subset selection}, SIAM Journal on Matrix
  Analysis and Applications, 41 (2020), pp.~1651--1673.

\bibitem{Delathauwer2000}
{\sc L.~De~Lathauwer, B.~De~Moor, and J.~Vandewalle}, {\em A multilinear
  singular value decomposition}, SIAM {J}ournal on {M}atrix {A}nalysis and
  {A}pplications, 21 (2000), pp.~1253--1278.

\bibitem{drineas2007randomizedLAA}
{\sc P.~Drineas and M.~W. Mahoney}, {\em A randomized algorithm for a
  tensor-based generalization of the singular value decomposition}, Linear
  {A}lgebra and its {A}pplications, 420 (2007), pp.~553--571.

\bibitem{drineas2008SIAMrelative}
{\sc P.~Drineas, M.~W. Mahoney, and S.~Muthukrishnan}, {\em Relative-error
  {CUR} matrix decompositions}, SIAM Journal on Matrix Analysis and
  Applications, 30 (2008), pp.~844--881.

\bibitem{drmac2016SIAMnew}
{\sc Z.~Drmac and S.~Gugercin}, {\em A new selection operator for the discrete
  empirical interpolation method---improved a priori error bound and
  extensions}, SIAM Journal on Scientific Computing, 38 (2016), pp.~A631--A648.

\bibitem{gidisu2022hybridArxiv}
{\sc P.~Y. Gidisu and M.~E. Hochstenbach}, {\em A hybrid {DEIM} and leverage
  scores based method for {CUR} index selection}, Progress in Industrial
  Mathematics at ECMI 2021,  (2022), pp.~147--153.

\bibitem{gidisu2022rsvd}
{\sc P.~Y. {}Gidisu and M.~E. Hochstenbach}, {\em A {Restricted} {SVD} type
  {CUR} decomposition for matrix triplets}, arXiv:2204.02113,  (2022).

\bibitem{chechik2007JMLReec}
{\sc A.~Globerson, G.~Chechik, F.~Pereira, and N.~Tishby}, {\em {Euclidean
  Embedding of Co-occurrence Data}}, The Journal of Machine Learning Research,
  8 (2007), pp.~2265--2295.

\bibitem{gu1996SIAMefficient}
{\sc M.~Gu and S.~C. Eisenstat}, {\em Efficient algorithms for computing a
  strong rank-revealing {QR} factorization}, SIAM Journal on Scientific
  Computing, 17 (1996), pp.~848--869.

\bibitem{halko2011SIAMfinding}
{\sc N.~Halko, P.-G. Martinsson, and J.~A. Tropp}, {\em Finding structure with
  randomness: {Probabilistic} algorithms for constructing approximate matrix
  decompositions}, SIAM {Review}, 53 (2011), pp.~217--288.

\bibitem{hamm2021SIAMperturbations}
{\sc K.~Hamm and L.~Huang}, {\em Perturbations of {CUR} decompositions}, SIAM
  Journal on Matrix Analysis and Applications, 42 (2021), pp.~351--375.

\bibitem{hendryx2021extended}
{\sc E.~P. Hendryx, B.~M. Rivi{\`e}re, and C.~G. Rusin}, {\em An extended
  {DEIM} algorithm for subset selection and class identification}, Machine
  Learning, 110 (2021), pp.~621--650.

\bibitem{jolliffe1972JRSSdiscarding}
{\sc I.~T. Jolliffe}, {\em Discarding variables in a principal component
  analysis. i: {Artificial} data}, Journal of the Royal Statistical Society:
  Series C (Applied Statistics), 21 (1972), pp.~160--173.

\bibitem{kilmer2013SIAMthird}
{\sc M.~E. Kilmer, K.~Braman, N.~Hao, and R.~C. Hoover}, {\em Third-order
  tensors as operators on matrices: A theoretical and computational framework
  with applications in imaging}, SIAM Journal on Matrix Analysis and
  Applications, 34 (2013), pp.~148--172.

\bibitem{kolda2009tensorSIAM}
{\sc T.~G. Kolda and B.~W. Bader}, {\em Tensor decompositions and
  applications}, SIAM {Review}, 51 (2009), pp.~455--500.

\bibitem{mahoneySIAM2008tensor}
{\sc M.~W. Mahoney, M.~Maggioni, and P.~Drineas}, {\em {Tensor-CUR}
  decompositions for tensor-based data}, SIAM Journal on Matrix Analysis and
  Applications, 30 (2008), pp.~957--987.

\bibitem{martinsson2011ACHArandomized}
{\sc P.-G. Martinsson, V.~Rokhlin, and M.~Tygert}, {\em A randomized algorithm
  for the decomposition of matrices}, Applied and Computational Harmonic
  Analysis, 30 (2011), pp.~47--68.

\bibitem{minster2020SIAMrandomized}
{\sc R.~Minster, A.~K. Saibaba, and M.~E. Kilmer}, {\em Randomized algorithms
  for low-rank tensor decompositions in the {T}ucker format}, SIAM Journal on
  Mathematics of Data Science, 2 (2020), pp.~189--215.

\bibitem{Qi2021triple}
{\sc L.~Qi, Y.~Chen, M.~Bakshi, and X.~Zhang}, {\em Triple decomposition and
  tensor recovery of third order tensors}, SIAM {J}ournal on {M}atrix
  {A}nalysis and {A}pplications, 42 (2021), pp.~299--329.

\bibitem{saibaba2016hoidSIAM}
{\sc A.~K. Saibaba}, {\em {HOID}: higher order interpolatory decomposition for
  tensors based on {T}ucker representation}, SIAM Journal on Matrix Analysis
  and Applications, 37 (2016), pp.~1223--1249.

\bibitem{savas2007PRhandwritten}
{\sc B.~Savas and L.~Eld{\'e}n}, {\em Handwritten digit classification using
  higher order singular value decomposition}, Pattern {R}ecognition, 40 (2007),
  pp.~993--1003.

\bibitem{smith2017frostt}
{\sc S.~Smith, J.~W. Choi, J.~Li, R.~Vuduc, J.~Park, X.~Liu, and G.~Karypis},
  {\em {FROSTT}: The formidable repository of open sparse tensors and tools},
  http://frostt.io, 2017.

\bibitem{sorensen2016deimSIAM}
{\sc D.~C. Sorensen and M.~Embree}, {\em A {DEIM} induced {CUR} factorization},
  SIAM Journal on Scientific Computing, 38 (2016), pp.~A1454--A1482.

\bibitem{szyld2006many}
{\sc D.~B. Szyld}, {\em The many proofs of an identity on the norm of oblique
  projections}, Numerical Algorithms, 42 (2006), pp.~309--323.

\bibitem{tucker1966PSYchometriKasome}
{\sc L.~R. Tucker}, {\em Some mathematical notes on three-mode factor
  analysis}, Psychometrika, 31 (1966), pp.~279--311.

\bibitem{tyrtyshnikov2000COMPincomplete}
{\sc E.~Tyrtyshnikov}, {\em Incomplete cross approximation in the
  mosaic-skeleton method}, Computing, 64 (2000), pp.~367--380.

\bibitem{Wei2018}
{\sc Y.~Wei, P.~Stanimirovi{\'c}, and M.~Petkovi{\'c}}, {\em Numerical and
  Symbolic Computations of Generalized Inverses}, Hackensack, NJ: World
  Scientific, 2018.

\end{thebibliography}
\end{document}